\documentclass[nointlimits]{amsart}

\usepackage[T1]{fontenc} 
\usepackage[utf8]{inputenc} 
\usepackage[USenglish]{babel} 

\usepackage{lmodern}
\usepackage{mathrsfs}
\usepackage{microtype}

\usepackage{url}\usepackage{natbib}

\usepackage{amsmath, amssymb, amsthm}

\usepackage{soul}

\usepackage{xcolor}

\usepackage[normalem]{ulem}

\usepackage[shortlabels]{enumitem}

\usepackage{hyperref}
\newcommand{\TITLE}{Operators on Orlicz sequence spaces and \texorpdfstring{$\Delta_2$-fundamentality}{Δ2-fundamentality}}
\hypersetup{
	unicode=true,
	breaklinks=true,
	pdftitle={\TITLE},
	pdfauthor={David Edmunds, Jan Krejčí, Zdeněk Mihula, and Luboš Pick}
	}

\makeatletter
\let\save@mathaccent\mathaccent
\newcommand*\if@single[3]{%
  \setbox0\hbox{${\mathaccent"0362{#1}}^H$}%
  \setbox2\hbox{${\mathaccent"0362{\kern0pt#1}}^H$}%
  \ifdim\ht0=\ht2 #3\else #2\fi
  }
\newcommand*\rel@kern[1]{\kern#1\dimexpr\macc@kerna}
\newcommand*\widebar[1]{\@ifnextchar^{{\wide@bar{#1}{0}}}{\wide@bar{#1}{1}}}
\newcommand*\wide@bar[2]{\if@single{#1}{\wide@bar@{#1}{#2}{1}}{\wide@bar@{#1}{#2}{2}}}
\newcommand*\wide@bar@[3]{%
  \begingroup
  \def\mathaccent##1##2{%
    \let\mathaccent\save@mathaccent
    \if#32 \let\macc@nucleus\first@char \fi
    \setbox\z@\hbox{$\macc@style{\macc@nucleus}_{}$}%
    \setbox\tw@\hbox{$\macc@style{\macc@nucleus}{}_{}$}%
    \dimen@\wd\tw@
    \advance\dimen@-\wd\z@
    \divide\dimen@ 3
    \@tempdima\wd\tw@
    \advance\@tempdima-\scriptspace
    \divide\@tempdima 10
    \advance\dimen@-\@tempdima
    \ifdim\dimen@>\z@ \dimen@0pt\fi
    \rel@kern{0.6}\kern-\dimen@
    \if#31
      \overline{\rel@kern{-0.6}\kern\dimen@\macc@nucleus\rel@kern{0.4}\kern\dimen@}%
      \advance\dimen@0.4\dimexpr\macc@kerna
      \let\final@kern#2%
      \ifdim\dimen@<\z@ \let\final@kern1\fi
      \if\final@kern1 \kern-\dimen@\fi
    \else
      \overline{\rel@kern{-0.6}\kern\dimen@#1}%
    \fi
  }%
  \macc@depth\@ne
  \let\math@bgroup\@empty \let\math@egroup\macc@set@skewchar
  \mathsurround\z@ \frozen@everymath{\mathgroup\macc@group\relax}%
  \macc@set@skewchar\relax
  \let\mathaccentV\macc@nested@a
  \if#31
    \macc@nested@a\relax111{#1}%
  \else
    \def\gobble@till@marker##1\endmarker{}%
    \futurelet\first@char\gobble@till@marker#1\endmarker
    \ifcat\noexpand\first@char A\else
      \def\first@char{}%
    \fi
    \macc@nested@a\relax111{\first@char}%
  \fi
  \endgroup
}
\makeatother

\numberwithin{equation}{section}

\theoremstyle{plain}
\newtheorem{theorem}{Theorem}[section]

\newtheorem{lemma}[theorem]{Lemma}

\theoremstyle{definition}
\newtheorem{definition}[theorem]{Definition}

\newcommand{\R}{\mathbb{R}}
\newcommand{\rn}{\R^n}

\newcommand{\N}{\mathbb{N}}

\newcommand{\DTwo}{\Delta_2}
\newcommand{\DTwoZ}{\DTwo^0}
\newcommand{\tA}{\tilde{A}}

\newcommand{\ttA}{\tilde{\tA}}
\newcommand{\lA}{\ell_A}
\newcommand{\ltA}{\ell_{\tA}}
\newcommand{\lAOrl}{\ell_{(A)}}
\newcommand{\ltAOrl}{\ell_{(\tA)}}
\newcommand{\rA}{\varrho_A}
\newcommand{\rtA}{\varrho_{\tilde{A}}}

\def\loc{\operatorname{loc}}
\def\rn{\mathbb R^n}

\newcommand*\dd{\mathop{}\!\mathrm{d}}

\title{\TITLE}
\author{David E. Edmunds}
\address{David E. Edmunds, Department of Mathematics, University of Sussex, Pevensey 2 Building, BN1 9QH Brighton, Sussex, United Kingdom}
\email{davideedmunds@aol.com}

\author{Jan Krejčí}
\address{Jan Krejčí, Department of Mathematical Analysis,
Faculty of Mathematics and Physics,
Charles University,
So\-ko\-lo\-vsk\'a~83,
186~75 Praha~8,
Czech Republic}
\email{jankrejci.gmk@gmail.com}

\author{Zden\v ek Mihula}
\address{Zden\v ek Mihula, Czech Technical University in Prague, Faculty of Electrical Engineering, Department of Mathematics, Technick\'a~2, 166~27 Praha~6, Czech Republic}
\email{mihulzde@fel.cvut.cz}
\urladdr{\href{https://orcid.org/0000-0001-6962-7635}{0000-0001-6962-7635}}

\author{Lubo\v{s} Pick}
\address{Lubo\v{s} Pick, Department of Mathematical Analysis,
Faculty of Mathematics and Physics,
Charles University,
So\-ko\-lo\-vsk\'a~83,
186~75 Praha~8,
Czech Republic}
\email{pick@karlin.mff.cuni.cz}
\urladdr{\href{https://orcid.org/0000-0002-3584-1454}{0000-0002-3584-1454}}

\begin{document}
\setcitestyle{numbers}
\bibliographystyle{abbrv}

\subjclass[2020]{46E30, 46A45, 46B45, 46B70}
\keywords{Young function, Orlicz sequence space, $\DTwo$ condition, $\DTwoZ$-fundamental sequence, interpolation}

\begin{abstract}
A classical result states that the Hardy--Littlewood maximal operator is bounded on an Orlicz space $L^A(\rn)$ if and only if its conjugate Young function $\tA$ satisfies the $\Delta_2$-condition. The same condition also characterizes the boundedness on $L^A(0,\infty)$ of the Hardy averaging operator. We consider a discrete analogue of the problem, extended to a general interpolation framework. We offer several characterizing conditions for the boundedness of discrete maximal and average operators on Orlicz spaces. Although the principal result is as expected, for its proof some new techniques have to be developed. To this end, we  introduce a new notion of the so-called $\Delta_2$-fundamental sequence, and give its interesting characterization by a simple condition involving only a limes superior of the ratio of two subsequent terms. We also prove a dual statement concerning operators of Copson type. 
\end{abstract}

\maketitle


\section{Introduction}

It is classically known that the original one-dimensional Hardy--Littlewood maximal operator 
\begin{equation*}
    \theta f(t) =
    \sup_{|t-s|>0}
    \frac{1}{t-s}\int_{s}^{t}
    |f(y)|\,dy,
\end{equation*}
defined for every locally integrable function on $(0,\infty)$ and every $t\in(0,\infty)$, is bounded on a rearrangement-invariant space $E$ if and only if the norm of the dilation operator $D_t$, given by
\begin{equation*}
    D_tf(s)=f(t^{-1}s)\quad\text{for $t,s\in(0,\infty)$,}
\end{equation*}
satisfies
\begin{equation*}
    \|D_t\|_{E}=o(t)\quad\text{as $t\to\infty$.}
\end{equation*}
For this result, as well as for the precise definition of rearrangement-invariant spaces and other details, we refer the reader to~\cite[Chapter II, Theorem 6.10]{Kre:82}. The boundedness on rearrangement-invariant spaces of the higher-dimensional modification of the Hardy--Littlewood maximal operator,  given by
\begin{equation*}
    Mf(x) =
    \sup_{Q\owns x}
    \frac{1}{|Q|}\int_{Q}
    |f(y)|\,dy,
\end{equation*}
in which the supremum is extended over all $n$-dimensional cubes $Q$ with sides parallel to the coordinate axes, $f\in L^{1}_{\loc}(\rn)$, and $x\in\rn$,
can further be characterized in terms of indices, see~\cite{Lor:55,Shi:65,Boy:67}.
Gallardo~\cite{Gal:88} characterized the class of all Orlicz spaces over an ambient Euclidean space on which this operator is bounded, and, at the same time, noted that it coincides with the collection of all Orlicz spaces over an interval on which the Hardy average operator is bounded.

Our goal is  to consider analogous problems for discrete versions of the mentioned  operators. 
More precisely, we will give a characterization of all Orlicz sequence spaces on which the discrete Hardy--Littlewood maximal operator, still denoted  $M$, and given as
\begin{equation*}
    (Mx)_n = \sup_{1\leq i\leq n \leq j} \frac1{j - i + 1}\sum_{k = i}^j |x_k|\quad\text{for $x=\{x_n\}_{n = 1}^\infty$ and $n\in\N$},
\end{equation*}
is bounded. Similarly as in the continuous case, we shall consider also the discrete Hardy operator $P$, given as
\begin{equation*}
    (Px)_n = \frac1{n} \sum_{k = 1}^n x_k\quad\text{for $x=\{x_n\}_{n = 1}^\infty$ and $n\in\N$,}
\end{equation*}
and its dual operator with respect to the $\ell_1$-pairing, namely the Copson operator $Q$, given as
\begin{equation*}
    (Qx)_n = \sum_{k = n}^\infty \frac{x_k}{k}\quad\text{for $x=\{x_n\}_{n = 1}^\infty$ and $n\in\N$}.
\end{equation*}
Note that
\begin{equation*}
    |(Px)|_n=|(Px)_n| \leq (P|x|)_n \leq
    (Mx)_n \quad \text{for every $n\in\N$}.
\end{equation*}

Our first main result gives a characterization of boundedness of both $M$ and $P$ by several criteria.
We will denote by $\DTwoZ$ the class of Young functions that satisfy the $\Delta_2$ condition near zero. For precise definitions and further details see Section~\ref{sec:prel}.

\begin{theorem}
    \label{T:main}
Let $A$ be a Young function and $\tA$ its complementary function. The following statements are equivalent.
\begin{enumerate}[(1)]
    \item\label{it:interpolation_modular} For every quasi-linear operator $T$, defined on $\ell_\infty$ with values in sequences, satisfying
    \begin{equation}\label{E:interpolation:endpoints}
        T\colon \ell_1 \to \ell_{1,\infty} \qquad\text{and}\qquad T\colon \ell_\infty \to \ell_\infty \qquad \text{boundedly},
    \end{equation}
    there are constants $M,\gamma>0$ such that
\begin{equation}\label{E:interpolation:modular}
    \sum_{n=1}^\infty A(|Tx|_n) \leq M \sum_{n=1}^\infty A(\gamma |x_n|) \quad \text{for every $x=\{x_n\}_{n=1}^\infty\in\lA$}.
\end{equation}
    \item\label{it:interpolation_norm} Every quasi-linear operator $T$, defined on $\ell_\infty$ with values in sequences, satisfying the endpoint estimates \eqref{E:interpolation:endpoints}
    is bounded on $\lA$.
    \item\label{it:average_bdd} The averaging operator $P$ is bounded on $\lA$.
    \item\label{it:maximal_bdd} The maximal operator $M$ is bounded on $\lA$.
    \item\label{it:maximal_bdd_restricted} There is a constant $C>0$ such that
    \begin{equation}\label{E:maximal_bdd_restricted}
        \|M\chi_E\|_{\lA} \leq C \|\chi_E\|_{\lA} \quad \text{for every finite $E\subseteq \N$}.
    \end{equation}
    \item\label{it:delta20} $\tA\in\DTwoZ$.
    \item\label{it:every_fund_seq} $\tA$ is positive outside the origin and for every sequence $\{t_n\}_{n=1}^\infty$ satisfying
    \begin{equation}\label{E1}
        t_n \searrow0 \quad \text{as $n\to\infty$}
    \end{equation}
    and
    \begin{equation}\label{E2}
        \limsup_{n\to\infty} \frac{t_n}{t_{n+1}} \leq 2,
    \end{equation}
    we have
    \begin{equation}\label{E3}
        \sup_{n\in\N} \frac{\tA(2t_n)}{\tA(t_n)} < \infty.
    \end{equation}
     \item\label{it:one_fund_seq} $\tA$ is positive outside the origin, and there is a sequence $\{t_n\}_{n=1}^\infty$ satisfying \eqref{E1} and \eqref{E2},
    for which \eqref{E3} is true.
\end{enumerate}
\end{theorem}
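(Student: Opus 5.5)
We will prove the equivalences along the chain
\[
\eqref{it:interpolation_modular}\Rightarrow\eqref{it:interpolation_norm}\Rightarrow\eqref{it:maximal_bdd}\Rightarrow\eqref{it:average_bdd}\Rightarrow\eqref{it:maximal_bdd_restricted}\Rightarrow\eqref{it:one_fund_seq}\Rightarrow\eqref{it:every_fund_seq}\Rightarrow\eqref{it:delta20}\Rightarrow\eqref{it:interpolation_modular},
\]
with a possible detour for the middle steps.

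\textbf{Routine implications.} The implication \eqref{it:interpolation_modular}$\Rightarrow$\eqref{it:interpolation_norm} is standard: apply the modular inequality to $x/(\gamma\lambda)$ with $\lambda=\|x\|_{\lA}$, and use convexity of $A$ to absorb $M$, which gives $\|Tx\|_{\lA}\le \max(1,M)\gamma\|x\|_{\lA}$. For \eqref{it:interpolation_norm}$\Rightarrow$\eqref{it:maximal_bdd}, the discrete maximal operator is sublinear and bounded on $\ell_\infty$. It is also of weak type $(1,1)$, by a discrete Vitali/rising-sun covering argument, or by comparison with the continuous one-dimensional maximal operator applied to step functions. The pointwise estimate $|Px|\le Mx$ gives \eqref{it:maximal_bdd}$\Rightarrow$\eqref{it:average_bdd}.

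\textbf{From \eqref{it:average_bdd} to \eqref{it:maximal_bdd_restricted}.} For finite $E$ we have $(M\chi_E)_n\le C\,(P\chi_{[1,|E|]})$-type expressions up to shifts. The idea is that $M\chi_E$ is dominated, after decreasing rearrangement, by a constant multiple of $P(\chi_E^*)$, because the rearrangement of the discrete maximal function is controlled by $P$ of the rearrangement (a discrete Riesz–Herz inequality). Since $\lA$ is rearrangement invariant, this transfers the bound.

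\textbf{From \eqref{it:maximal_bdd_restricted} to \eqref{it:one_fund_seq}.} Testing on $E=\{1,\dots,m\}$, $M\chi_E$ behaves like $\min(1,m/n)$. The fundamental function of $\lA$ is $\varphi(m)=1/A^{-1}(1/m)$. The inequality then becomes a condition of the form
\[
\sum_{n>m}A\bigl(m/(n\,\varphi(m)C)\bigr)\lesssim 1 .
\]
Put $t_k=A^{-1}(1/2^k)$, or rather the associated values of $\tA'$, so that $\limsup t_k/t_{k+1}\le 2$ holds automatically by concavity of $A^{-1}$. Using $A^{-1}(s)\tA^{-1}(s)\approx s$, the summed condition should translate into $\tA(2t_k)\le K\tA(t_k)$ along this sequence. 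Positivity of $\tA$ outside the origin follows because if $\tA$ vanished on $[0,a]$, then $A$ would be linear near $0$, giving $\lA=\ell_1$, on which $M$ fails the restricted bound (take $m=1$: $\sum 1/n=\infty$).

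\textbf{From \eqref{it:one_fund_seq} to \eqref{it:every_fund_seq} to \eqref{it:delta20}.} This is the heart of the proof and the main obstacle: showing that $\Delta_2$ along a single sequence whose ratios are asymptotically at most $2$ forces $\Delta_2$ near $0$. I would first try an interlacing argument. Given any $t\in(0,t_1]$, pick $n$ with $t_{n+1}\le t\le t_n$. Since $t_n/t_{n+1}\le 2+\varepsilon$ eventually, we get $2t\le 2t_n$ and $t\ge t_{n+1}$. Hence
\[
\tA(2t)\le \tA(2t_n)\le K\tA(t_n)\le K\tA(2t_{n+1})\cdots
\]
The difficulty is the factor $2+\varepsilon>2$, since we need $t_n\le 2t_{n+1}$ to close the chain. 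Passing to a subsequence or using $t_n\le 4t_{n+2}$ does not immediately fix this. The plan is to exploit convexity: $\tA(\lambda s)\ge\lambda\tA(s)$, so the ratio $\tA(2s)/\tA(s)$ is controlled along geometric blocks. Combining many steps, $\tA(2^k t_n)\le K^k\tA(t_n)$, and a slight slack $(2+\varepsilon)^j\le 2^{j+1}$ over blocks of length $j\sim 1/\varepsilon$ yields $\tA(2t)\le K^{c}\tA(t)$. Then \eqref{it:every_fund_seq} follows immediately from \eqref{it:delta20}, closing the loop.

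\textbf{From \eqref{it:delta20} to \eqref{it:interpolation_modular}.} Use the discrete Calderón/Boyd-type argument. Decompose $x=x\chi_{|x|>s/2}+x\chi_{|x|\le s/2}$ at each level $s$, and use the $\ell_\infty$ bound to kill the small part after scaling by $\gamma$. The weak $(1,1)$ bound on the large part gives
\[
\#\{|Tx|>s\}\le \frac{C}{s}\sum_{|x_k|>cs}|x_k|.
\]
Integrating against $dA(s)$ and using Fubini gives
\[
\sum_n A(|Tx|_n)\le C\sum_k |x_k|\int_0^{|x_k|/c}\frac{A(s)}{s^2}\,ds\cdot(\dots),
\]
and the bound
\[
\int_0^u A(s)/s^2\,ds\lesssim A(\gamma u)/u
\]
near zero is exactly equivalent to $\tA\in\DTwoZ$. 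Only small values matter, since $x\in\lA$ is bounded and the $\ell_\infty$ estimate handles large levels.
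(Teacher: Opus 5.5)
Your routine implications are fine and interchangeable with the paper's. You use the weak type $(1,1)$ of $M$, then $|Px|\le Mx$, and then the discrete Riesz--Herz inequality for (3)$\Rightarrow$(5). The paper instead uses the weak type of $P$ and places Riesz--Herz in (3)$\Rightarrow$(4).

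\textbf{Main gap: (8)$\Rightarrow$(6).} This is the step you call the heart of the proof, and your repair does not work.
The bound $\tA(2^kt_n)\le K^k\tA(t_n)$ would need the doubling estimate at $2t_n,4t_n,\dots$, which are not points of the sequence. The block slack $(2+\varepsilon)^j\le 2^{j+1}$ only controls cumulative ratios $t_n/t_{n+j}$, whereas the obstruction is local. Finally, $\tA(\lambda s)\ge\lambda\tA(s)$ is a \emph{lower} bound on growth, but you need an \emph{upper} bound for $\tA(t_n)$ in terms of $\tA(2t_{n+1})$ when $2t_{n+1}<t_n\le(2+\varepsilon)t_{n+1}$.

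The missing idea is to use the doubling bound at the upper endpoint $t_n$ to control the slope of $\tA$ just left of $t_n$. Monotonicity of difference quotients of the convex function $\tA$ at $2t_{n+1}<t_n<2t_n$ gives
\[
\frac{\tA(t_n)-\tA(2t_{n+1})}{t_n-2t_{n+1}}\le\frac{\tA(2t_n)-\tA(t_n)}{t_n}\le\frac{(K-1)\,\tA(t_n)}{t_n}.
\]
Hence $\tA(t_n)\bigl(1-(K-1)\frac{t_n-2t_{n+1}}{t_n}\bigr)\le\tA(2t_{n+1})$. Since \eqref{E2} gives $\limsup_n\frac{t_n-2t_{n+1}}{t_n}\le 0$, eventually $\tA(t_n)\le 2\tA(2t_{n+1})\le 2K\tA(t_{n+1})$. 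So for $t\in[t_{n+1},t_n]$ with $n$ large, $\tA(2t)\le\tA(2t_n)\le K\tA(t_n)\le 2K^2\tA(t)$. This is exactly where $\limsup\le 2$, rather than mere boundedness of the ratios, is used: the uncovered gap $[2t_{n+1},t_n]$ has relative length tending to $0$.

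\textbf{Second gap: (5)$\Rightarrow$(8) is only announced.} Your candidate $t_k=A^{-1}(2^{-k})$ lives on the wrong side of the duality. The relation $A^{-1}(s)\tA^{-1}(s)\approx s$ compares the two inverses at the same argument and says nothing about $\tA$ at the points $A^{-1}(2^{-k})$. Passing to derivative values, as you suggest, can destroy \eqref{E2} because of jumps of $a$.

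More importantly, converting the tested inequality into a doubling bound needs a large parameter that beats the constant $C$ in (5); a one-point comparison loses. The paper proceeds as follows.
\begin{itemize}
\item It takes $t_n=\tA^{-1}(\frac1{mn})$ with $\log m$ large compared with $C$.
\item It gets $t_n\le 2t_{n+1}$ from $\tA(2t_{n+1})\ge 2\tA(t_{n+1})=\frac{2}{m(n+1)}\ge\frac1{mn}$.
\item It tests $P\chi_{[1,n]}\le M\chi_{[1,n]}$ in the Orlicz norm against $y=\tA^{-1}(\frac1{mn})\chi_{[1,mn]}$, which satisfies $\rtA(y)=1$.
\item The harmonic sum gives $n\log m\,\tA^{-1}(\frac1{mn})\le\|P\chi_{[1,n]}\|_{\lAOrl}\lesssim Cn\tA^{-1}(\frac1n)$.
\item Hence $2t_n\le\tA^{-1}(\frac1n)$, i.e.\ $\tA(2t_n)\le\frac1n=m\tA(t_n)$.
\end{itemize}
Your Luxemburg-side sum could only be exploited through the same logarithm. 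Since $A(s)/s$ is nondecreasing, the terms $m<n\le\lambda m$ contribute a factor $\log\lambda$, which must be taken large compared with $C$.

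\textbf{Remark on (6)$\Rightarrow$(1).} ``Only small values matter'' is not a valid justification. The modular inequality must hold for $x\in\lA$ with arbitrarily large entries, so you need $\int_0^u a(s)\,s^{-1}\,ds\lesssim A(\gamma u)/u$ for all $u>0$, i.e.\ a global condition on $\tA$.

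The paper obtains this by modifying $\tA$ away from $0$ via Lemma~\ref{lem:better_Young_function}. Such a modification preserves $\lA$ and norm bounds, but not modular inequalities. For instance, take $A(t)=t^2$ on $[0,1]$ and $A(t)=2t-1$ on $[1,\infty)$. Then $\tA\in\DTwoZ$, yet $T=P$ and $x=Ne^1$ give $\sum_n A((Px)_n)\ge 2N\log N-N$ against $M\,A(\gamma N)\le 2M\gamma N$. So this implication, as stated, fails for such $A$ (for example when $A$ grows only linearly at infinity), and no argument about behaviour near $0$ can close it.
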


The (not really unexpected) characterization of the boundedness of $M$ on an Orlicz sequence space $\ell_{A}$, where $A$ is a Young function, by the localized variant (near zero) of the $\Delta_2$ condition, is the equivalence of statements~\ref{it:maximal_bdd} and~\ref{it:delta20}. Likewise, the theorem yields the coincidence of the action of $M$ and $P$ on Orlicz sequence spaces, which also has its analogy in the continuous world and which was pointed out by Gallardo in~\cite{Gal:88}. The same goes for the inequality in the modular form~\eqref{E:interpolation:modular} and the inequality restricted to characteristic functions~\eqref{E:maximal_bdd_restricted}, both observed in the Euclidean version by Gallardo. However, in accordance with the notoriously known fact that the discrete and continuous worlds are quite different from one another and new techniques are required in order to establish results in the former, we discover some stuff which does not have analogues for continuous versions of the notions and which has not been noticed before. This is reflected by the last two  statements of Theorem~\ref{T:main}.

In the course of the proof of Theorem~\ref{T:main}, we make use of a construction which is an inspiration for introducing the following interesting new notion.

\begin{definition}
    A sequence  $\{t_n\}_{n=1}^\infty\subset (0,\infty)$ satisfying $t_n\searrow 0$ is called \emph{$\DTwoZ$-fundamental} if for every Young function $A$ positive outside the origin and such that
    \begin{equation*}
        \sup_{n\in\N} \frac{A(2t_n)}{A(t_n)} < \infty,
    \end{equation*}
    one has $A\in\DTwoZ$.
\end{definition}

Since the $\DTwoZ$-fundamentality of a sequence is a crucial feature in the proof of the main result, it is worth studying in detail. We will now show that there is a rather surprising simple characterization of it. A remarkable fact is that it is universal for all Young functions. We believe that the following theorem is of independent interest in the general theory of Orlicz spaces.

\begin{theorem}\label{T:fundamental-sequence}
    A sequence $\{t_n\}_{n=1}^\infty\subset (0,\infty)$ satisfying $t_n\searrow 0$ is $\DTwoZ$-fundamental if
    and only if it satisfies~\eqref{E2}.
\end{theorem}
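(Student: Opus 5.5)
We prove the two implications separately. Sufficiency is the substantial part; necessity requires constructing a counterexample Young function.

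\emph{Sufficiency.} Assume \eqref{E2} and let $A$ be a Young function positive outside the origin with $K:=\sup_n A(2t_n)/A(t_n)<\infty$. We want $\limsup_{t\to0^+}A(2t)/A(t)<\infty$.

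Fix $\varepsilon>0$ small. By \eqref{E2} there is $n_0$ with $t_n\le(2+\varepsilon)t_{n+1}$ for $n\ge n_0$. Let $0<t<t_{n_0}$. Since $t_n\searrow0$, there is $n\ge n_0$ with $t_{n+1}\le t<t_n$. Monotonicity gives
\[
A(2t)\le A(2t_n)\le K A(t_n)\quad\text{and}\quad A(t)\ge A(t_{n+1}).
\]
It therefore suffices to bound $A(t_n)/A(t_{n+1})$. We have $t_n\le(2+\varepsilon)t_{n+1}\le 4t_{n+1}$ for $\varepsilon\le 2$. This is the obstacle, because the doubling hypothesis controls $A(2t_{n+1})$ but not $A(4t_{n+1})$.

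To get around it we use convexity more cleverly. Convexity and $A(0)=0$ give $A(\lambda s)\le \lambda A(s)$ for $\lambda\le1$, hence $A(\mu s)\ge \mu A(s)$ for $\mu\ge1$. Write $t_n=\lambda\cdot 2t_{n+1}$ with $\lambda=t_n/(2t_{n+1})\le 1+\varepsilon/2$.
\begin{itemize}
\item If $\lambda\le1$, then $A(t_n)\le A(2t_{n+1})\le KA(t_{n+1})$, and we are done.
\item If $\lambda>1$, we need $A(\lambda s)$ bounded by a multiple of $A(s)$ for $s=2t_{n+1}$ and $\lambda$ close to $1$. For a general convex function this fails.
\end{itemize}
So we instead iterate with finer steps. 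For $m\ge n$,
\[
t_m\le (2+\varepsilon)^{m-n}t_{n}\quad\text{and}\quad A(t_n)\le K^{-1}A(2t_n).
\]
The key trick is the chain of convexity inequalities
\[
A(2t_{n+1})\ \ge\ \frac{2t_{n+1}}{t_n}A(t_n)\ \ge\ \frac{2}{2+\varepsilon}A(t_n).
\]
Hence
\[
A(t_n)\le \tfrac{2+\varepsilon}{2}A(2t_{n+1})\le \tfrac{2+\varepsilon}{2}KA(t_{n+1}).
\]
This also covers the case $\limsup=2$, since a constant like $2$ suffices. Then $A(2t)\le K\cdot\tfrac{2+\varepsilon}{2}K\,A(t)$ for $t<t_{n_0}$, so $A\in\DTwoZ$.

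\emph{Necessity.} Suppose $\limsup t_n/t_{n+1}>2$. Then there are $q>2$ and a subsequence $n_k$ with $t_{n_k}>q\,t_{n_k+1}$. Passing to a sparser subsequence, we may assume the gaps are arbitrarily thin in relative terms, for instance $t_{n_{k+1}}\le t_{n_k+1}/k$.

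We then build $A$ piecewise linear near $0$, convex and positive:
\begin{itemize}
\item On each gap $(t_{n_k+1},t_{n_k})$, let $A$ jump in slope sharply just above $2t_{n_k+1}$, which is less than $t_{n_k}$ since $q>2$. This makes $A(4t_{n_k+1})/A(2t_{n_k+1})$, or some similar ratio, tend to infinity, so $A\notin\DTwoZ$.
\item Elsewhere, in particular on $[t_{n_{k+1}},t_{n_k+1}]$ and on $[t_{n_k},2t_{n_k}]$, keep $A$ linear (slope constant). On a linear stretch through the origin's extension, $A(2s)/A(s)\le 2$ for all points $t_m$ whose doubling stays in a region of constant slope.
\end{itemize}
The points $t_m$ all lie outside the open gaps, and the kinks are placed in $(2t_{n_k+1},t_{n_k})$. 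Hence for each $m$ the interval $[t_m,2t_m]$ either avoids the kinks or, when $t_m=t_{n_k}$, starts to the right of them. In the latter case the slope already increased before $t_{n_k}$; we then make $A$ affine with that slope from the kink onward up to beyond $2t_{n_k}$, and use $A(t_{n_k})\ge$ (slope)$\cdot(t_{n_k}-\text{kink})$ to bound the ratio.

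Verifying this bookkeeping, namely uniform boundedness of $A(2t_m)/A(t_m)$ together with convexity of the slopes (which requires slopes increasing as $t$ increases), is the main technical step. We handle it by choosing the slopes $a_k$ on the $k$-th block recursively with $a_{k-1}\ge k\,a_k$.
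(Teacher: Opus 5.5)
Your sufficiency argument fails at its central step. The ``key trick''
\[
A(2t_{n+1})\ \ge\ \frac{2t_{n+1}}{t_n}\,A(t_n)
\]
is reversed in the only case that monotonicity does not already settle, namely $2t_{n+1}<t_n$. There, since $s\mapsto A(s)/s$ is nondecreasing, convexity gives $A(2t_{n+1})\le \frac{2t_{n+1}}{t_n}A(t_n)$. Put differently, you are asserting $A(\lambda s)\le\lambda A(s)$ with $s=2t_{n+1}$ and $\lambda=t_n/(2t_{n+1})>1$. That is exactly what you had just (correctly) said fails for general convex functions, and $A(s)=s^2$ already violates it. The side remark $A(t_n)\le K^{-1}A(2t_n)$ is also reversed: it contradicts $A(2t_n)\le KA(t_n)$.

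In fact, convexity together with the doubling bound at $t_{n+1}$ alone cannot control $A(t_n)/A(t_{n+1})$ at all. Take $A$ linear on $[0,2t_{n+1}]$ and arbitrarily steep afterwards. A telltale sign is that your $\varepsilon$ is fixed independently of $K$, whereas $n_0$ has to depend on $K$.

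The missing idea is to use the doubling bound at $t_n$ itself to control the slope of $A$ to the \emph{left} of $t_n$. Suppose $2t_{n+1}<t_n$. The bound $A(2t_n)\le KA(t_n)$ says the secant slope of $A$ on $[t_n,2t_n]$ is at most $(K-1)A(t_n)/t_n$. By convexity, the secant slope on $[2t_{n+1},t_n]$ is no larger, so
\[
A(t_n)-A(2t_{n+1})\le (K-1)\,\frac{t_n-2t_{n+1}}{t_n}\,A(t_n).
\]
Since \eqref{E2} gives $\limsup_n (t_n-2t_{n+1})/t_n\le 0$, taking $\varepsilon\le 1/(K-1)$ in the choice of $n_0$ makes the right-hand side at most $\frac12A(t_n)$ for $n\ge n_0$. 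Hence $A(t_n)\le 2KA(t_{n+1})$ in both cases, and $A(2t)\le 2K^2A(t)$ for $0<t\le t_{n_0}$. This is precisely the paper's argument: the step (8)$\Rightarrow$(6) in the proof of Theorem~\ref{T:main}, which the paper invokes for the `if' part.

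Your necessity sketch is, in substance, the paper's construction. The density is constant between jumps placed at (or just above) $2t_{n_k+1}$, with jump ratios tending to infinity, and the bounded ratios come from the lower bound $A(t_m)\ge s\,(t_m-\kappa_k)$. Your witness $A(4t_{n_k+1})/A(2t_{n_k+1})$ is the right one. The ratio at $2t_{n_k}$, which the paper displays, can stay bounded when the $n_k$ are sparse.

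Two things need tightening. First, ``$A(2s)/A(s)\le 2$ on a linear stretch'' is false unless the stretch reaches the origin. Second, what you actually need is $\sup_k t_{n_k}/(t_{n_k}-\kappa_k)<\infty$, i.e.\ $\kappa_k\le c\,t_{n_k}$ for a fixed $c<1$ (e.g.\ $\kappa_k=2t_{n_k+1}\le(2/q)\,t_{n_k}$). That single bound then covers every $t_m$ in the block, since $t_m\ge t_{n_k}$.
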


We predict that the $\Delta_2$-fundamentality will turn out to have its analogue in the continuous setting, too, but we do not pursue this line of research in this paper.

\medskip

We shall now consider the operator $Q$. Interestingly, it turns out that a considerably different statement that captures its initial endpoint behavior is needed, compared to the analogous result for $P$  and $M$.
\begin{theorem}
    \label{T:main_dual}
Let $A$ be a Young function and $\tA$ its complementary function. The following statements are equivalent.
\begin{enumerate}[(1)]
    \item\label{it:interpolation_dual} Every quasi-linear operator $T$, defined on sequences $x=\{x_n\}_{n=1}^\infty$ for which $(Q|x|)_1< \infty$ with values in sequences, satisfying for all $N\in\N$ and $x=\{x_n\}_{n=1}^\infty$ for which $(Q|x|)_1< \infty$,
    \begin{equation}\label{E:interpolation_dual:HLP}
        \sum_{k=1}^N (Tx)_k^* \leq C \sum_{k=1}^N (Q|x|)_k
    \end{equation}
    with some constant $C>0$ is bounded on $\lA$.
    \item\label{it:average_bdd_dual} The Copson operator $Q$ is bounded on $\lA$.
    \item\label{it:delta20_dual} $A\in\DTwoZ$.
\end{enumerate}
\end{theorem}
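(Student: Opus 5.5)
The plan is to prove the cycle \ref{it:interpolation_dual} $\Rightarrow$ \ref{it:average_bdd_dual} $\Rightarrow$ \ref{it:delta20_dual} $\Rightarrow$ \ref{it:interpolation_dual}.

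\textbf{Step 1: \ref{it:interpolation_dual} $\Rightarrow$ \ref{it:average_bdd_dual}.} This is immediate by taking $T=Q$. The operator $Q$ is linear. Moreover $|(Qx)_k|\le (Q|x|)_k$, and $Q|x|$ is nonincreasing, so
\[
\sum_{k=1}^N (Qx)^*_k \le \sum_{k=1}^N (Q|x|)^*_k=\sum_{k=1}^N (Q|x|)_k .
\]
This is \eqref{E:interpolation_dual:HLP} with $C=1$.

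\textbf{Step 2: \ref{it:average_bdd_dual} $\Rightarrow$ \ref{it:delta20_dual}.} The idea is to test $Q$ on characteristic functions of blocks. Take $x=\chi_{\{m+1,\dots,2m\}}$. For $k\le m$,
\[
(Qx)_k=\sum_{j=m+1}^{2m}\tfrac1j\ge \tfrac12 .
\]
This only gives $\|\chi_{[1,m]}\|_{\lA}\lesssim\|\chi_{[m+1,2m]}\|_{\lA}$, which is trivial, so blocks are not enough and a sharper test is needed. I would use the duality of $Q$ with $P$ in the $\ell_1$ pairing. Boundedness of $Q$ on $\lA$ gives boundedness of $P=Q'$ on the Köthe dual $\ell_{\tA}$ (Orlicz norm), and then Theorem~\ref{T:main} with $A$ replaced by $\tA$ yields $\ttA=A\in\DTwoZ$. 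This route requires the Köthe dual of $\lA$ to be $\ell_{\tA}$ up to equivalent norms, which holds for sequence Orlicz spaces. So Step 2 reduces to Theorem~\ref{T:main}, direction \ref{it:average_bdd} $\Rightarrow$ \ref{it:delta20}. One technicality must be checked: when $\tA$ vanishes near $0$ or $A$ is degenerate, $\lA=\ell_\infty$ or $\ell_1$ up to equivalence. In the $\ell_\infty$ case $Q$ is unbounded (take $x=1$), which is consistent with $A\notin\DTwoZ$.

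\textbf{Step 3: \ref{it:delta20_dual} $\Rightarrow$ \ref{it:interpolation_dual}.} This is the main obstacle. The condition \eqref{E:interpolation_dual:HLP} says that $Tx$ is majorized by $C\,Q|x|$ in the Hardy--Littlewood--Pólya sense. Since the Orlicz norm $\|\cdot\|_{\lA}$ is rearrangement invariant and satisfies the Hardy--Littlewood--Pólya property (Calderón's theorem: $\sum_{k\le N}y^*_k\le\sum_{k\le N}z^*_k$ for all $N$ implies $\|y\|\le\|z\|$ for exact interpolation spaces between $\ell_1$ and $\ell_\infty$, which Orlicz spaces are), we get
\[
\|Tx\|_{\lA}\le C\|Q|x|\|_{\lA}.
\]
It therefore suffices to show that $Q$ is bounded on $\lA$ whenever $A\in\DTwoZ$.

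For the modular estimate, first reduce to nonincreasing $x\ge0$. This works because $Q|x|\le Qx^*$ pointwise: the weights $1/k$ decrease, so a rearrangement inequality applies on each tail. Then discretize as in the continuous case. The $\DTwoZ$ condition near $0$ gives $A(2t)\le K A(t)$ for $t\le t_0$, and hence $A(\lambda t)\le K\lambda^{p}A(t)$ for $\lambda\ge1$ and small $t$, with $p=\log_2 K$. Equivalently, $A$ has finite upper index near zero, so $A\in\DTwoZ$ is an upper Boyd/Matuszewska index $<\infty$ condition. That index condition is precisely the known criterion for boundedness of the Copson operator on rearrangement-invariant sequence spaces (via the discrete Boyd index $\lim\|D_m\|^{1/\log m}$). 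I would prove it directly as follows.

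Write $Qx=\sum_{j\ge0} 2^{-j}$-weighted dilations: for nonincreasing $x$,
\[
(Qx)_n\le \sum_{j\ge0}\sum_{2^jn\le k<2^{j+1}n}\frac{x_k}{k}\le \sum_{j\ge0} x_{2^jn}.
\]
Hence
\[
\|Qx\|_{\lA}\le\sum_{j\ge 0}\|\{x_{2^jn}\}_n\|_{\lA}.
\]
Compression by $2^j$ of a nonincreasing sequence costs the norm of the discrete dilation, which is $\lesssim 2^{-j/p'}$ for some $p'<\infty$ by the $\Delta_2$ growth bound (using $\sum_n A(x_{2^jn}/\lambda)\le 2^{-j}\sum_k A(x_k/\lambda)$ and then rescaling inside via $A(\lambda t)\le K\lambda^pA(t)$). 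The resulting geometric series converges. The delicate part is handling large values of $x$, where $\DTwoZ$ gives no information. Here I would use that $x\in\lA$ implies $x\to0$, so only finitely many terms are large. Those terms contribute to $Qx$ a sequence dominated by $C\|x\|_\infty/n$ beyond a fixed index, and $\{1/n\}\in\lA$ because $A\in\DTwoZ$ gives $A(1/n)\lesssim n^{-1-\varepsilon}$? That last claim is false in general, since $\DTwoZ$ bounds $A$ from below only by $t^p$. So instead I would normalize $\|x\|_{\lA}\le1$, which forces $x_k\le c$ for all $k$ and puts all values in the range where $\DTwoZ$ applies. The large-value issue then disappears, and the geometric-series argument completes the proof.
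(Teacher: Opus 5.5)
\textbf{Gap.} Steps 1 and 2 are fine and agree with the paper. So is the reduction $\|Tx\|_{\lA}\le C\|Q|x|\|_{\lA}$ via the Hardy--Littlewood--P\'olya principle in Step 3. The gaps are in your direct proof that $A\in\DTwoZ$ makes $Q$ bounded.

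First, the pointwise inequality $Q|x|\le Qx^*$ is false. For $x=e^2$ one has $(Q|x|)_2=\tfrac12$, whereas $x^*=e^1$ gives $(Qx^*)_2=0$. This matters because your dyadic bound $(Qx)_n\le\sum_{j\ge0}x_{2^jn}$ needs monotone $x$. The reduction can be repaired in norm. Write $\sum_{n\le N}(Q|x|)_n=\sum_k|x_k|\min\{1,N/k\}$; the weight is nonincreasing in $k$, so the Hardy--Littlewood inequality gives $\sum_{n\le N}(Q|x|)_n\le\sum_{n\le N}(Qx^*)_n$ for every $N$. The HLP principle then yields $\|Q|x|\|_{\lA}\le\|Qx^*\|_{\lA}$.

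Second, normalizing $\|x\|_{\lA}\le1$ does not resolve the large-value issue. The estimate $A(\lambda t)\le K\lambda^pA(t)$ coming from $\DTwoZ$ requires the \emph{larger} argument $\lambda t$ to stay in the range where the $\Delta_2$ inequality holds and $A$ is finite. In your rescaling $\lambda=c_j^{-1}=(K2^{-j})^{-1/p}\to\infty$, so bounding the $x_k$ does not bound $x_{2^jn}/(c_j\|x\|_{\lA})$. The clean fix is Lemma~\ref{lem:better_Young_function}: replace $A$ by $B\in\DTwo$ with $\ell_B=\lA$. Then $B(\lambda t)\le K\lambda^pB(t)$ holds for all $\lambda\ge1$ and $t\ge0$. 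Your bound $\|\{x_{2^jn}\}_n\|_{\ell_B}\le(K2^{-j})^{1/p}\|x\|_{\ell_B}$ becomes valid, and the geometric series closes.

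\textbf{Comparison with the paper.} The detour is unnecessary, and avoiding it is exactly what the paper does. The computation behind your Step 2 is an identity,
$\sup_{\|x\|_{\ltA}\le1,\,x\ge0}\|Px\|_{\ltA}=\sup_{\|y\|_{\lAOrl}\le1,\,y\ge0}\|Qy\|_{\lAOrl}$.
Theorem~\ref{T:main} applied to $\tA$ is an equivalence: $\ttA=A\in\DTwoZ$ iff $P$ is bounded on $\ltA$. Together these give (2)$\Leftrightarrow$(3) in both directions at once. The paper then proves (1)$\Rightarrow$(2) with $T=Q$ and (2)$\Rightarrow$(1) by the HLP principle, just as you do. Once repaired, your dyadic argument is a legitimate self-contained proof of the sufficiency half, essentially the Boyd-index argument. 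It buys nothing here, though, because your necessity half already rests on Theorem~\ref{T:main}.
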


We finally turn our attention to the discrete Hilbert transform, defined by
    \begin{equation*}
        (Hx)_n = \sum_{k\in\mathbb Z\setminus\{0\}}
        \frac{x_{n-k}}{k},
        \quad\text{for $x=\{x_n\}_{n = -\infty}^\infty$ and $n\in\mathbb Z$.}
    \end{equation*}
It has been of interest for a long time (cf.~e.g.~\cite[Chapter IX]{Har:88} or the discussion in~\cite[page~19]{Ben:80}). This operator significantly differs from those considered so far in the sense that the sum is extended over all integers (and not solely the positive ones), but the results obtained can be used to give a characterization of its boundedness on Orlicz spaces nonetheless. Let us still recall that the boundedness of various modifications of the integral version of the Hilbert transform (defined on $\R$ or localized to a torus etc.) have been treated for instance in~\cite{Rya:63} or~\cite{Boy:67}.

\begin{theorem}
    \label{T:hilbert}
    Let $A$ be a Young function and $\tA$ its complementary function. The discrete Hilbert transform $H$ satisfies 
    \begin{equation}\label{E:h-on-orlicz}
        H\colon \ell_A\to\ell_A
    \end{equation}
    if and only if both $A$ and $\tA$ satisfy the condition $\DTwoZ$.
\end{theorem}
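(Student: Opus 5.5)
The proof splits into two implications: necessity of both conditions, and sufficiency via a decomposition of $H$ into one-sided operators of the form $P$ and $Q$. Theorems~\ref{T:main} and~\ref{T:main_dual} then handle each piece.

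\emph{Sufficiency.} Assume $A,\tA\in\DTwoZ$ and split $x$ into its parts supported on the nonnegative and negative integers. By reflection symmetry it is enough to treat $x$ supported on $\{n\ge 0\}$ and to bound $Hx$ separately on $\{n\ge 0\}$ and on $\{n<0\}$; the latter is just the mirror image of the former.

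\begin{itemize}
\item For $n\ge 0$, write $(Hx)_n=\sum_{m\neq n} x_m/(n-m)$ and split the sum into the ranges $m<n/2$, $n/2\le m\le 2n$ with $m\ne n$, and $m>2n$. This is the standard decomposition of the Hilbert transform.
\item On $m<n/2$, the kernel is $1/(n-m)\approx 1/n$, so this piece is dominated by $C(P|x|)_n$.
\item On $m>2n$, the kernel satisfies $|1/(n-m)|\approx 1/m$, so this piece is dominated by $C(Q|x|)_n$.
\item For $n<0$, every term has $|n-m|\ge |n|+m$, so $|(Hx)_n|\le \sum_{m\ge0}|x_m|/(|n|+m)$. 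Split according to $m\le |n|$ and $m>|n|$. This is again controlled by $P$ and $Q$ applied to $|x|$ (after shifting indices by one), so these operators handle this part too.
\end{itemize}

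The middle range $n/2\le m\le 2n$ is the genuinely singular part. I would handle it by interpolation instead of pointwise estimates. Since $A\in\DTwoZ$ and $\tA\in\DTwoZ$, the Boyd-type (Matuszewska-type) indices of $\ell_A$ lie strictly between $1$ and $\infty$. So $\ell_A$ is an interpolation space between $\ell_{p}$ and $\ell_{q}$ for some $1<p<q<\infty$, taking into account that only behaviour near zero matters for sequences.

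Boundedness of $H$ on $\ell_p$ for $1<p<\infty$ is classical (M.~Riesz for the discrete transform, Hardy--Littlewood--P\'olya, \cite[Chapter IX]{Har:88}). Interpolating then gives $H\colon\ell_A\to\ell_A$.

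If a cleaner route inside the paper's framework is wanted, the alternative is the weak-type estimate $H\colon \ell_1\to\ell_{1,\infty}$ together with a duality argument. $H$ is antisymmetric with respect to the $\ell_1$-pairing, so boundedness on $\ell_{\tA}$ transfers to $\ell_A$. One then applies statement~\ref{it:interpolation_norm} of Theorem~\ref{T:main} to the restricted-to-finite-part maximal-type operators. With the pointwise domination above, the $P$ and $Q$ pieces are bounded by Theorem~\ref{T:main} (using $\tA\in\DTwoZ$) and Theorem~\ref{T:main_dual} (using $A\in\DTwoZ$).

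\emph{Necessity.} Assume \eqref{E:h-on-orlicz}.

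\begin{itemize}
\item To get $A\in\DTwoZ$, test $H$ on $x=\chi_{\{1,\dots,N\}}$. For $n\le 0$ one has $|(Hx)_n|\approx\log\frac{|n|+N}{|n|+1}$, and in particular $|(Hx)_{-n}|\gtrsim N/n$ for $n\ge N$. This gives $\|(1/n)\chi_{\{n\ge N\}}\|_{\lA}\lesssim N^{-1}\|\chi_{\{1..N\}}\|_{\lA}$. Combined with \ref{it:delta20_dual}$\Leftarrow$\ref{it:average_bdd_dual}-type reasoning, I would aim to show that $Q$ is bounded on $\ell_A$ restricted to characteristic functions. Concretely, $(Q\chi_{\{1..N\}})_n\approx\log(N/n)$ is dominated by $|(H\chi_{[1,N]})_{\,\cdot}|$ on suitable indices, and $P\chi$ similarly. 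I then need a restricted-type analogue of Theorem~\ref{T:main_dual}; equivalently, I can use duality, since $H^*=-H$ is then bounded on $\ell_{\tA}$.
\item To get $\tA\in\DTwoZ$, use the fact that for nonnegative $x$ supported in $\{1,\dots\}$ and $n\le 0$, $|(Hx)_n|\ge \sum_k x_k/(k+|n|)$. For $n=-m$ this is at least $\frac{1}{2m}\sum_{k\le m}x_k=\frac12(Px)_m$. Hence $\|Px\|_{\lA}\le 2\|Hx\|_{\lA}\lesssim\|x\|_{\lA}$, and Theorem~\ref{T:main} gives $\tA\in\DTwoZ$.
\item Applying the same argument to $H$ on $\ell_{\tA}$ (bounded by duality, since $H$ is antisymmetric) gives $\ttA=A\in\DTwoZ$. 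So the testing with $\chi_{[1,N]}$ is not actually needed.
\end{itemize}

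The main obstacle is the sufficiency for the singular middle range. I would first try to invoke the interpolation statement of Theorem~\ref{T:main}. The naive endpoint $\ell_\infty\to\ell_\infty$ fails for $H$, so this must be combined with duality; the fallback is the index/interpolation argument with $\ell_p$.
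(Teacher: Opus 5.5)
Your argument is correct in substance, but it takes a different route from the paper. The paper's proof is two lines: it quotes Andersen's theorem \cite[Theorem~3]{And:75}, by which $H$ is bounded on $\lA$ if and only if both $P$ and $Q$ are. It then reads off $\tA\in\DTwoZ$ from Theorem~\ref{T:main} and $A\in\DTwoZ$ from Theorem~\ref{T:main_dual}.

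\textbf{Necessity.} Your argument here is a self-contained proof of the easy half of Andersen's theorem. For $x\ge0$ supported on $\N$ and $m\in\N$ one has exactly $|(Hx)_{-m}|=\sum_{j\ge1}x_j/(j+m)\ge\frac12(Px)_m$, so $P$ is bounded on $\lA$. Your duality step is legitimate: $H$ is antisymmetric, $\ltA$ is the associate space of $\lA$, and Fatou lets you pass from finitely supported $y$. You can skip duality altogether, though. The same sum is also at least $\frac12(Qx)_m$, since $j+m\le 2j$ for $j\ge m$. Hence $Q$ is bounded on $\lA$, and Theorem~\ref{T:main_dual} gives $A\in\DTwoZ$ directly.

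\textbf{Sufficiency.} Here you replace Andersen's theorem by equally external machinery:
\begin{itemize}
\item the translation of $A,\tA\in\DTwoZ$ into Matuszewska--Orlicz indices at zero lying strictly inside $(1,\infty)$;
\item a Boyd-type interpolation theorem for rearrangement-invariant sequence spaces;
\item the M.~Riesz theorem for $H$ on $\ell_p$.
\end{itemize}
These are standard facts, but you should cite them. Once you have them, your decomposition into $P$-, $Q$- and local pieces is superfluous: interpolate $H$ itself. Theorems~\ref{T:main} and~\ref{T:main_dual} then play no role in this direction.

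\textbf{Comparison.} The paper's route presents Theorem~\ref{T:hilbert} as a corollary of its two main results, with everything delegated to Andersen. Yours makes necessity elementary but rests sufficiency on index theory.

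\textbf{What to drop.} The ``cleaner route inside the paper's framework'' is not a proof and should be removed. Statement~\ref{it:interpolation_norm} of Theorem~\ref{T:main} requires $T\colon\ell_\infty\to\ell_\infty$. Neither $H$ nor its local part $\sum_{n/2\le m\le 2n,\,m\ne n}x_m/(n-m)$ has this property. The $\ell_\infty$ operator norm of the local part is $\sup_n\sum_{n/2\le m\le 2n,\,m\ne n}|n-m|^{-1}\approx\sup_n\log n=\infty$. Duality does not rescue this, because applying Theorem~\ref{T:main} on $\ltA$ would need the same $\ell_\infty$ endpoint there. Likewise, the first necessity bullet (testing on $\chi_{\{1,\dots,N\}}$ and a ``restricted-type analogue'' of Theorem~\ref{T:main_dual}) is unfinished, as you note yourself, and can simply be deleted.
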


The paper is structured as follows. We first collect the background material in the preliminary section, and then, in the last section, we give detailed proofs of all the results.
    
\section{Preliminaries}\label{sec:prel}

We say that a function $A\colon[0,\infty]\to[0,\infty]$ satisfying
$A(0)=0$ and not being constant on the entire interval $(0,\infty)$ is a \emph{Young
function} if it is left continuous and convex on $[0,\infty)$. A Young function $A$ that is positive and finite-valued on the interval $(0, \infty)$ and satisfies
\begin{equation*}
    \lim_{t\to0^+}\frac{A(t)}{t} = 0 \qquad \text{and} \qquad \lim_{t\to\infty}\frac{A(t)}{t} = \infty
\end{equation*}
is called an \emph{$N$-function}. For every Young function $A$, the function $t\mapsto \frac{A(t)}{t}$ is nondecreasing on $(0,\infty)$. The convexity of any Young function $A$ implies that
\begin{align}
    A(\lambda t) &\leq \lambda A(t) \quad \text{for all $\lambda\in[0,1]$ and $t\in[0, \infty)$} \label{prel:convexity_small_lambda}\\
    \intertext{and}
    A(\lambda t) &\geq \lambda A(t) \quad \text{for all $\lambda\in[1, \infty)$ and $t\in[0, \infty)$}. \nonumber
\end{align}

Given a sequence $x=\{x_n\}_{n=1}^\infty$, we denote by $|x|$ the sequence $\{|x_n|\}_{n = 1}^\infty$. Furthermore, when $x=|x|$, we write $x\geq0$.

Given a Young function $A$, we define the \emph{Orlicz semimodular} $\rA$ as
\begin{equation*}
    \rA(x) = \sum_{n = 1}^\infty A(|x_n|),\ x=\{x_n\}_{n=1}^\infty.
\end{equation*}
The \emph{Orlicz sequence space} $\lA$ is then defined as
\begin{equation*}
	\lA = \{x=\{x_n\}_{n=1}^\infty: \|x\|_{\lA} < \infty\},
\end{equation*}
where
\begin{equation*}
	\|x\|_{\lA} =
		\inf\Big\{
			\lambda>0: \rA\Big(\frac{x}{\lambda}\Big) \le 1 \Big\},\ x=\{x_n\}_{n=1}^\infty.
\end{equation*}
The Orlicz sequence space $\lA$ endowed with the functional $\|\cdot\|_{\lA}$, which is called the \emph{Luxemburg functional}, is a rearrangement-invariant Banach space in the sense of \cite{BS}. The rearrangement-invariance means that
\begin{equation}\label{prel:Lux_norm_ri}
    \|x\|_{\lA} = \|x^*\|_{\lA},
\end{equation}
where $x^*$ is the \emph{nonincreasing rearrangement of a sequence} $x=\{x_n\}_{n=1}^\infty$ defined as
\begin{equation*}
    x^*_n = \inf\{\lambda>0: \#\{n\in\N: |x_n| > \lambda \}\leq n - 1\},\ n\in\N.
\end{equation*}
When $x\in c_0$, its nonincreasing rearrangement is simply the sequence $\{|x_n|\}_{n=1}^\infty$ rearranged in nonincreasing order. There are many useful inequalities concerning rearrangements (see~\cite{Har:88} for some classical ones). One of them is the \emph{Hardy--Littlewood inequality}:
\begin{equation*}
    \sum_{n = 1}^\infty |x_n||y_n| \leq \sum_{n = 1}^\infty x^*_n y^*_n \quad \text{for all $x=\{x_n\}_{n=1}^\infty$, $y=\{y_n\}_{n=1}^\infty$}.
\end{equation*}
In particular, it implies that
\begin{equation}\label{prel:HL_on_P}
    (P|x|)_n \leq (P(x^*))_n \quad \text{for all $x$ and $n\in\N$},
\end{equation}
in which the latter, being an average of a nonincreasing sequence, is nonincreasing.

It is sometimes useful to know that the infimum in the definition of $\|\cdot\|_{\lA}$ is attained. More precisely, for every $x\in\lA\setminus\{0\}$, we have
\begin{equation}\label{prel:inf_att_by_Lux_norm}
    \rA\Big( \frac{x}{\|x\|_{\lA}} \Big) \leq 1.
\end{equation}

The Lebesgue sequence spaces $\ell_p$ for $p\in[1, \infty]$ are Orlicz spaces. More precisely, we have $\ell_p = \ell_{A_p}$ with equal norms, where
\begin{align*}
    A_p(t) = t^{p} \quad \text{if $p\in[1, \infty)$} \qquad \text{and} \qquad A_p(t) = \infty\chi_{(1, \infty]}(t) \quad \text{if $p=\infty$}
\end{align*}
for $t\in[0, \infty)$. Furthemore, every Orlicz sequence space $\lA$ satisfies
\begin{equation*}
    \ell_1 \hookrightarrow \lA \hookrightarrow \ell_\infty.
\end{equation*}
The symbol $\hookrightarrow$ stands for a continuous embedding. Given two (quasi-)normed sequence spaces $A$ and $B$, we write $A\hookrightarrow B$ if $A\subseteq B$ in the set-theoretical sense and there is a constant $C>0$ such that $\|x\|_B \leq C \|x\|_A$ for every $x \in A$. When $A\hookrightarrow B$ and $B\hookrightarrow A$ simultaneously, we write $A = B$.

A function $A\colon[0, \infty] \to[0, \infty]$ is a Young function if and only if there exists a nondecreasing function $a\colon[0,\infty)\to[0,\infty]$, which is neither constantly $0$ nor constantly $\infty$ on $(0,\infty)$, such that $a(0) = 0$ and
\begin{equation} \label{prel:orlicz-integral-representation-of-young-function}
	A(t)=\int_0^t a(\tau) \dd\tau
		\quad\text{for every $t\in[0, \infty]$}.
\end{equation}
Since $a$ is monotone, it can be assumed to be left-continuous or right-continuous, as necessary. It follows immediately from the monotonicity of $a$ that
\begin{equation}
\label{E:relation-between-a-and-A}
    \frac{A(t)}{t} \le a(t) \quad \text{for every $t \in(0,\infty)$.}
\end{equation}

Furthermore, the Orlicz semimodular $\rA$ can be expressed in terms of the density $a$ as
\begin{equation}\label{prel:Orlicz_mnodular_via_distribution}
    \rA(x) = \int_0^\infty a(t) \#\{n\in\N: |x_n| > t \} \dd t \quad \text{for every $x=\{x_n\}_{n=1}^\infty$}.
\end{equation}

We say that a Young function $A$ satisfies the \emph{$\DTwo$ condition} (globally), and write $A\in\DTwo$, if there is a constant $C>0$ such that
\begin{equation*}
    A(2t) \leq C A(t) \quad \text{for every $t\in(0, \infty)$}.
\end{equation*}
Note that since $A$ is not constant on the interval $(0, \infty)$, $A$ is necessarily positive and finite-valued on the interval $(0, \infty)$ if $A\in\DTwo$. Moreover, every finite-valued Young function $A$ is locally Lipschitz continuous on the interval $[0, \infty)$. 
Furthermore, we say that a Young function $A$ satisfies the \emph{$\DTwo$ condition near $0$}, and write $A\in\DTwoZ$, if $A$ is positive on the interval $(0, \infty)$ and there are $t_0\in(0, \infty)$ and a constant $C>0$ such that
\begin{equation*}
    A(2t) \leq C A(t) \quad \text{for every $t\in(0, t_0]$}.
\end{equation*}
Note that $A\in\DTwoZ$ entails that $A$ is positive outside $0$. 

Let $A$ be a Young function. The function $\tilde A\colon[0,\infty]\to[0,\infty]$ defined as
\begin{equation*}
	\tA(t)=\sup_{\tau\in[0, \infty)}\big( \tau t-A(\tau) \big),\ t\in[0,\infty],
\end{equation*}
is called the \emph{complementary function} of $A$. The complementary function of a Young function is also a Young function. Moreover, if $A$ is an $N$-function, so is $\tA$. Furthermore, we have
\begin{equation}\label{prel:double_complementary}
    \ttA = A.
\end{equation}

Besides the Luxemburg functional, there are other ways that an Orlicz space can be equivalently normed. An important example for us will be the \emph{Orlicz functional}, which enables us to formulate a sharp H\"{o}lder-type inequality in Orlicz spaces. Given a Young function $A$, the corresponding Orlicz functional $\|\cdot\|_{\lAOrl}$ is defined as
\begin{equation*}
   \|x\|_{\lAOrl} =  \sup_{\rtA(y)\leq 1} \sum_{n = 1}^\infty |x_n y_n|,\ x=\{x_n\}_{n=1}^\infty,
\end{equation*}
where the supremum extends over all sequences $y=\{y_n\}_{n=1}^\infty$ such that $\rtA(y)\leq1$. It is a norm on $\lA$, which is equivalent to the Luxemburg norm. More precisely, we have
\begin{equation}\label{prel:Orlicz_and_Lux_equiv}
    \|x\|_{\lA} \leq \|x\|_{\lAOrl} \leq 2 \|x\|_{\lA} \quad \text{for every $x=\{x_n\}_{n=1}^\infty$}.
\end{equation}
A sharp H\"{o}lder-type inequality in Orlicz spaces reads as
\begin{equation*}
    \sum_{n = 1}^\infty |x_n| |y_n| \leq \|x\|_{\lA} \|y\|_{\ltAOrl} \quad \text{for all $x=\{x_n\}_{n=1}^\infty$, $y=\{y_n\}_{n=1}^\infty$}.
\end{equation*}
Moreover, we have
\begin{equation}\label{prel:asoc_double_Lux}
    \|x\|_{\lA}  = \sup_{\|y\|_{\ltAOrl}\leq1} \sum_{n=1}^\infty |x_n||y_n|
\end{equation}
and
\begin{equation}\label{prel:asoc_double_Orl}
    \|x\|_{\lAOrl}  = \sup_{\|y\|_{\ltA}\leq1} \sum_{n=1}^\infty |x_n||y_n|
\end{equation}
for every $x=\{x_n\}_{n=1}^\infty$.

Situations in which a Young function $A$ is either linear near zero or equal to zero near $0$ are in a certain sense extremal. More precisely, it can be shown that $\lA = \ell_1$ (up to equivalent norms) if and only if there are $c, t_0\in(0, \infty)$ such that $A(t) = ct$ for every $t\in[0, t_0]$. Moreover, $\lA = \ell_\infty$ (up to equivalent norms) if and only if there is $t_0\in(0, \infty)$ such that $A(t) = 0$ for every $t\in[0, t_0]$. Furthermore, we have $A(t) = 0$ for every $t\in[0, t_0]$ and for some $t_0\in(0, \infty)$ if and only if $\tA(t) = ct$ for every $t\in[0, t_1]$ and for some $c,t_1\in(0, \infty)$.

Let $A$ and $B$ be Young functions. We say that \emph{$A$ dominates $B$ near zero}, and write $B\prec_0 A$, if there are $t_0\in(0,\infty)$ and a constant $c>0$ such that
\begin{equation*}
    B(t) \leq A(ct) \quad \text{for every $t\in[0, t_0]$}.
\end{equation*}
We say that \emph{$A$ and $B$ are equivalent near zero} if $A\prec_0 B$ and $B\prec_0 A$ simultaneously. If $A$ and $B$ are equivalent near zero, then $\lA = \ell_B$, up to equivalent norms. We have $\ell_B \hookrightarrow \lA$ if and only if $A\prec_0 B$.

Recall that the weak Lebesgue sequence space $\ell_{1,\infty}$ is defined as
\begin{equation*}
    \ell_{1, \infty} = \{ x = \{x_n\}_{n  = 1}^\infty: \|x\|_{\ell_{1, \infty}} < \infty\},
\end{equation*}
where
\begin{equation*}
    \|x\|_{\ell_{1, \infty}} = \sup_{t>0} t \#\{n\in\N: |x_n| > t\},\ x = \{x_n\}_{n  = 1}^\infty.
\end{equation*}
The weak Lebesgue sequence space $\ell_{1,\infty}$ endowed with $\|\cdot\|_{\ell_{1, \infty}}$ is a quasi-Banach sequence space, which is not normable. 
We have $\ell_1 \hookrightarrow \ell_{1, \infty}$, and the inclusion is strict. The quasi-norm $\|\cdot\|_{\ell_{1, \infty}}$ can alternatively be expressed as
\begin{equation}\label{E:weak_l1_in_terms_of_rearrangement}
    \|x\|_{\ell_{1, \infty}} = \sup_{n\in\N} n x^*_n \quad \text{for every $x = \{x_n\}_{n  = 1}^\infty$}.
\end{equation}

For every $k\in\N$, we denote by $e^k$ the $k$th canonical sequence, that is, $(e^k)_n = \delta_{k,n}$ for $n\in\N$. Furthermore, for a bounded set $M\subseteq(0,\infty)$ with $M\cap \N\neq \emptyset$, we define its \emph{characteristic sequence} $\chi_M$ as
\begin{equation*}
    \chi_M = \sum_{k\in\N\cap M} e^k.
\end{equation*}
We have
\begin{align}
    \|\chi_M\|_{\lA} &= \frac1{A^{-1}\Big( \frac1{\# \N\cap M} \Big)} \nonumber\\
    \intertext{and}
    \|\chi_M\|_{\lAOrl} &= \tA^{-1}\Big( \frac1{\# \N\cap M} \Big)\# \N\cap M, \label{prel:Orlicz_norm_char_seq}
\end{align}
in which $A^{-1}$ and $\tA^{-1}$ are the right-continuous inverses of $A$ and $\tA$, respectively. The right-continuous inverse of a nondecreasing function $F\colon[0,\infty] \to [0,\infty]$ satisfying $F(0) = 0$ is defined as
\begin{equation*}
    F^{-1}(t) = \sup\{\tau \geq 0: F(\tau) \leq t\},\ t\in[0, \infty].
\end{equation*}
When a Young function $A$ is positive and finite-valued on the interval $(0, \infty)$, $A^{-1}$ is its classical inverse.

We recall that an operator $T$  whose domain is some linear space of sequences and whose range is contained in the set of all sequences is called \emph{quasi-linear} if there exists a constant $c\ge1$ such that the relations
    \begin{equation}
        \label{E:quasilinear}
        |(T(x+y))_n|
        \le c
        \left(
        |(Tx)_n|
        +
        |(Ty)_n|
        \right),
        \quad
        |T(\lambda x)|=|\lambda|\cdot|Tx|
    \end{equation}    
hold for all sequences $x$ and $y$ in the domain of $T$ and all $\lambda\in\R$. If~\eqref{E:quasilinear} holds with $c=1$, then $T$ is said to be \emph{sublinear}.

\section{Proofs}

We begin by noticing that every Young function $A\in\DTwoZ$ can be (non-uniquely) redefined outside some neighborhood of $0$ in such a way that the resulting Young function satisfies the $\DTwo$ condition globally and respective complementary functions coincide near $0$.
\begin{lemma}\label{lem:better_Young_function}
Let $A\in\DTwoZ$ be a Young function and let $\delta\in(0, \infty)$ be such that $A$ is positive and finite-valued on the interval $(0, 2\delta)$. Then there is a Young function $B$ such that $B\in\DTwo$, $A(t)=B(t)$ for every $t\in[0,\delta]$,
\begin{equation}\label{E:better_Young_function:not_linear_near_infty}
    \lim_{t\to\infty}\frac{B(t)}{t} = \infty,
\end{equation} 
 and $\tA(t) = \tilde{B}(t)$ for every $t\in[0,\frac{A(\delta)}{\delta}]$. In particular, we have $\lA = \ell_B$ and $\ell_{\tA} = \ell_{\tilde{B}}$, up to equivalent norms.
\end{lemma}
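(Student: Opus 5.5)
Our plan is to keep $A$ on $[0,\delta]$ and extend it past $\delta$ by a power function glued in a $C^1$-compatible way, so that convexity, the global $\DTwo$ condition and superlinear growth are easy to verify. Let $a$ be the right-continuous density of $A$ as in \eqref{prel:orlicz-integral-representation-of-young-function}. Since $A$ is finite on $(0,2\delta)$, $a$ is finite on $[0,\delta]$, and since $A$ is positive there, $a(\delta)>0$. We define $b=a$ on $[0,\delta)$ and $b(t)=a(\delta)\,t/\delta$ for $t\geq\delta$. Then $b$ is nondecreasing, and we set $B(t)=\int_0^t b$. With this choice $B$ is a Young function, $B=A$ on $[0,\delta]$, and for $t\ge\delta$ we have $B(t)=A(\delta)+\frac{a(\delta)}{2\delta}(t^2-\delta^2)$. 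Consequently $B(t)/t\to\infty$, which is \eqref{E:better_Young_function:not_linear_near_infty}.

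Next we verify $B\in\DTwo$ by splitting into ranges of $t$. For $t\le\delta/2$ we have $B(2t)=A(2t)$ with $2t\le\delta$. By the $\DTwoZ$ assumption $A(2t)\le CA(t)$ for $t\le t_0$, and on the compact range $t\in[t_0,\delta/2]$ we use $A(2t)\le A(\delta)$ and $A(t)\ge A(t_0)>0$. For $t\ge\delta$, $B$ is a quadratic plus a constant, so $B(2t)\le 4B(t)+\text{const}$ and $B(t)\ge A(\delta)>0$, which gives a uniform constant. The remaining range $t\in[\delta/2,\delta]$ is again compact with $B$ bounded above and bounded away from $0$.

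Then we compare the complementary functions. For $s\in[0,\infty)$ the supremum defining $\tilde B(s)=\sup_\tau(\tau s-B(\tau))$ is attained where $s\in[b(\tau^-),b(\tau)]$, and likewise for $\tA$ with $a$. The key inequality is $A(\delta)/\delta\le a(\delta)$ from \eqref{E:relation-between-a-and-A}. It follows that for $s\le A(\delta)/\delta$, the function $\tau\mapsto\tau s-B(\tau)$ is nonincreasing on $[\delta,\infty)$, since its slope there is $s-b(\tau)\le s-a(\delta)\le0$. Hence the supremum may be restricted to $\tau\in[0,\delta]$, and the same holds for $A$, because $a(\tau)\ge a(\delta)\ge s$ for $\tau\ge\delta$ (possibly with $A=\infty$, which only lowers the expression). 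On $[0,\delta]$ we have $A=B$, so $\tA(s)=\tilde B(s)$ on $[0,A(\delta)/\delta]$. This is the step needing the most care: one must confirm that the threshold $A(\delta)/\delta$ is admissible (it even works up to $a(\delta)$) and handle $A$ possibly being infinite beyond $2\delta$. Both points are covered by the monotonicity argument on the slopes.

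Finally, since $A=B$ on $[0,\delta]$ and $\tA=\tilde B$ on a neighborhood of $0$ (with $A(\delta)/\delta>0$), the pairs are equivalent near zero. Hence $\lA=\ell_B$ and $\ltA=\ell_{\tilde B}$ with equivalent norms, by the remark on equivalence near zero in Section~\ref{sec:prel}.
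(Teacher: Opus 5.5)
Your proposal is correct and follows the paper's proof: you use the same extension $b(t)=a(\delta)t/\delta$ beyond $\delta$ and the same reduction of both suprema to $\tau\in[0,\delta]$ for $s\le A(\delta)/\delta$. The differences are cosmetic. You take the right-continuous density. You argue via the slope $s-b(\tau)\le 0$, which indeed gives the slightly larger range $s\le a(\delta)$, where the paper uses the monotonicity of $A(\tau)/\tau$ and the tangent-line lower bound for $B$. You also spell out the $\DTwo$ verification that the paper leaves to the reader.
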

\begin{proof}
    Let $a$ be the left-continuous density of $A$ satisfying \eqref{prel:orlicz-integral-representation-of-young-function}. Note that $0<a(\delta)<\infty$. We now define a function $B\colon[0,\infty]\to[0,\infty]$ as
\begin{equation*}
    B(t) = \int_0^t b(\tau) \dd\tau,\ t\in[0, \infty],
\end{equation*}
where $b\colon[0,\infty) \to [0, \infty)$ is defined as
\begin{equation*}
    b(t) = \begin{cases}
    a(t) \quad &\text{for $t\in[0, \delta]$},\\
    a(\delta)\frac{t}{\delta} \quad &\text{for $t\in[\delta, \infty]$}.
    \end{cases}
\end{equation*}
In can be easily verified that $B$ is a Young function that coincides with $A$ on the interval $[0, \delta]$ and satisfies $B\in\DTwo$ and \eqref{E:better_Young_function:not_linear_near_infty}.

Note that since the function $\tau\mapsto \frac{A(\tau)}{\tau}$ is nondecreasing on $(0,\infty)$, we have 
\begin{equation*}
    \tau t - A(\tau) = \tau\Bigg( t - \frac{A(\tau)}{\tau} \Bigg) \leq \tau\Bigg( t - \frac{A(\delta)}{\delta} \Bigg) \leq 0
\end{equation*}
for all $t\in[0,\frac{A(\delta)}{\delta}]$ and $\tau\in[\delta, \infty)$. It follows that
\begin{equation}\label{E:better_Young_function:1}
    \tA(t) = \sup_{\tau\in[0,\delta]} \big( \tau t - A(\tau) \big) \quad \text{for every $t\in[0,\tfrac{A(\delta)}{\delta}]$}.
\end{equation}
Furthermore, since $b \geq a\chi_{[0,\delta)} + a(\delta)\chi_{[\delta, \infty)}$, we have
\begin{align*}
    \tau t - B(\tau) &\leq \tau t - \big( A(\delta) + a(\delta)(\tau - \delta) \big) = \tau\Bigg(t - a(\delta) + \frac{ \delta a(\delta) - A(\delta)}{\tau} \Bigg) \\
    &\leq \tau \Bigg( t - \frac{A(\delta)}{\delta} \Bigg) \leq 0
\end{align*}
for all $t\in[0,\frac{A(\delta)}{\delta}]$ and $\tau\in[\delta, \infty)$. In the last inequality we have employed~\eqref{E:relation-between-a-and-A}.
Hence, using this and \eqref{E:better_Young_function:1}, we obtain
\begin{equation*}
    \tilde{B}(t) = \sup_{\tau\in[0,\delta]}\big( \tau t - B(t) \big) = \sup_{\tau\in[0,\delta]}\big( \tau t - A(t) \big) = \tilde{A}(t)
\end{equation*}
for every $t\in[0, \frac{A(\delta)}{\delta}]$.

Finally, since two Young functions that coincide near $0$ are clearly equivalent near $0$, it follows that $\lA = \ell_B$ and $\ell_{\tA} = \ell_{\tilde{B}}$, up to equivalent norms.
\end{proof}

\begin{proof}[Proof of Theorem~\ref{T:main}]

We shall subsequently verify the chains of implications
\begin{equation*}
    \ref{it:one_fund_seq}
    \Rightarrow
    \ref{it:delta20}
    \Rightarrow
    \ref{it:interpolation_modular}
    \Rightarrow
    \ref{it:interpolation_norm}
    \Rightarrow
    \ref{it:average_bdd}
    \Rightarrow
    \ref{it:maximal_bdd}
    \Rightarrow
    \ref{it:maximal_bdd_restricted}
    \Rightarrow
    \ref{it:one_fund_seq}
    \quad\text{and}\quad
    \ref{it:delta20}
    \Rightarrow
    \ref{it:every_fund_seq}
    \Rightarrow
    \ref{it:one_fund_seq}.
\end{equation*}
We begin by showing that \ref{it:one_fund_seq} implies \ref{it:delta20}. 
Let $\{t_n\}_{n=1}^\infty$ be the sequence from~\ref{it:one_fund_seq}.
Set
\begin{equation*}
    M = \sup_{n\in\N} \frac{\tA(2t_n)}{\tA(t_n)} \in (1, \infty).
\end{equation*}
We claim that there is $n_0\in\N$ such that
\begin{equation}\label{E:main:1}
   1 -  \frac{t_n - 2t_{n+1}}{t_n} (M-1) \geq \frac1{2} \quad \text{for every $n\geq n_0$}.
\end{equation}
Indeed, since the sequence $\{t_n\}_{n=1}^\infty$ satisfies \eqref{E2}, we have
\begin{equation*}
    \limsup_{n\to\infty}\Big( 1 - \frac{2t_{n+1}}{t_n} \Big) \leq 0.
\end{equation*}
It follows that
\begin{equation*}
    \liminf \Big( 1 - \frac{t_n - 2t_{n+1}}{t_n}(M-1) \Big) = 1- \limsup_{n\to\infty}\frac{t_n - 2t_{n+1}}{t_n}(M-1) \geq 1.
\end{equation*}
Hence there is $n_0\in\N$ such that \eqref{E:main:1} is true. Furthermore, we claim that
\begin{equation}\label{E:main:2}
    \tA(2t) \leq 2M^2 \tA(t) \quad \text{for every $t\leq t_{n_0}$},
\end{equation}
which will prove that $\tA\in\DTwoZ$. To this end, fix $t\in(0, t_{n_0}]$. In view of \eqref{E1}, there is $n\geq n_0$ such that
\begin{equation*}
    t\in [t_{n+1},t_n].
\end{equation*}
We now distinguish between two cases. First, assume that $t_n \leq 2t_{n+1}$. Using the monotonicity of $\tA$, we obtain
\begin{equation}\label{E:main:3}
    \tA(2t)  \leq \tA(2t_n) \leq M \tA(t_n)
    \leq M \tA(2t_{n+1})
    \leq M^2 \tA(t_{n+1}) \leq M^2 \tA(t).
\end{equation}
Second, assume that $t_n > 2t_{n+1}$. Note that this means that $t_n - 2t_{n+1} > 0$. Since $\tA$ is convex and $2t_{n+1} < t_n < 2t_n$, we have
\begin{equation*}
    \frac{\tA(t_n)  - \tA(2t_{n+1})}{t_n - 2t_{n+1}} \leq \frac{\tA(2t_n)  - \tA(t_n)}{t_n}.
\end{equation*}
Using this and the fact that $\tA(2t_n) \leq M \tA(t_n)$, we obtain
\begin{equation*}
    \tA(t_n) - \tA(2t_{n+1})\leq \frac{t_n - 2t_{n+1}}{t_n} (M-1)\tA(t_n).
\end{equation*}
Hence, we have
\begin{equation*}
    \tA(t_n) \Big(1 -  \frac{t_n - 2t_{n+1}}{t_n} (M-1)\Big) \leq \tA(2t_{n+1}).
\end{equation*}
Combining this with \eqref{E:main:1}, we arrive at
\begin{equation*}
   \frac{\tA(t_n)}{2} \leq \tA(t_n) \Big(1 -  \frac{t_n - 2t_{n+1}}{t_n} (M-1)\Big) \leq \tA(2t_{n+1}),
\end{equation*}
whence it follows that
\begin{equation*}
    \tA(t_n) \leq 2 \tA(2t_{n+1}).
\end{equation*}
Hence, using the monotonicity of $\tA$ once more, we obtain
\begin{equation}\label{E:main:4}
    \tA(2t)  \leq \tA(2t_n) \leq M \tA(t_n) \leq 2M \tA(2t_{n+1})
    \leq 2M^2 \tA(t_{n+1})
\leq 2M^2 \tA(t).
\end{equation}
Finally, in view of \eqref{E:main:3} and \eqref{E:main:4}, we see that \eqref{E:main:2} is true, which proves the implication.

Second, we verify that \ref{it:delta20} implies \ref{it:interpolation_modular}. Assume that $\tA\in\DTwoZ$. If
\begin{equation*}
    \lim_{t\to 0^+} \frac{\tA(t)}{t} > 0,
\end{equation*}
then it is easy to see that $\tA$ is equivalent to the Young function $t\mapsto t$ near $0$. Recall that we know that the limit exists thanks to the monotonicity of the function $t\mapsto \frac{\tA(t)}{t}$ on $(0,\infty)$. Consequently, $A(t) = 0$ near zero, whence it follows that $\lA = \ell_\infty$, up to equivalent norms. Therefore, there is nothing to prove if this is the case. From now on, we assume that
\begin{equation}\label{E:maintheorem:1}
    \lim_{t\to 0^+} \frac{\tA(t)}{t} = 0.
\end{equation}
In view of Lemma~\ref{lem:better_Young_function} applied to $\tA$, we may assume without loss of generality that $\tA\in\DTwo$ and
\begin{equation*}
    \lim_{t\to\infty} \frac{\tA(t)}{t} = \infty.
\end{equation*}
Furthermore, since we assume \eqref{E:maintheorem:1}, it follows that $\tA$ is an $N$-function. Consequently, so is $A$. Since $A$ is an $N$-function whose complementary function satisfies the $\Delta_2$ condition, by \cite[Theorem~2.2]{Gal:88} there are constants $M,\gamma>0$ such that
\eqref{E:interpolation:modular} is satisfied, which shows that \ref{it:interpolation_modular} is true. For the reader's convenience, we provide the proof of this interpolation result for sequence spaces. Using the fact that $A$ is an $N$-function whose complementary function satisfies the $\Delta_2$ condition, we have (e.g., see~\cite[Proposition~1.4]{Gal:88})
\begin{equation*}
    \inf_{t\in(0,\infty)} \frac{t a(t)}{A(t)} > 1,
\end{equation*}
where $a\colon[0, \infty) \to [0, \infty)$ is the right-continuous density of $A$ (recall~\eqref{prel:orlicz-integral-representation-of-young-function}). Hence, there is $\beta>1$ such that
\begin{equation}\label{E:maintheorem:2}
    \beta A(t) \leq t a(t) \quad \text{for every $t\in[0, \infty)$}.
\end{equation}
It follows that
\begin{equation}\label{E:maintheorem:3}
    \beta \int_0^t \frac{A(s)}{s^2} \dd s \leq \int_0^t \frac{a(s)}{s} \dd s \quad \text{for every $t\in[0, \infty)$}.
\end{equation}
We claim that
\begin{equation}\label{E:maintheorem:4}
    \int_0^t \frac{A(s)}{s^2} \dd s < \infty \quad \text{for every $t\in(0, \infty)$}.
\end{equation}
Indeed, given $t\in(0, 1)$, we integrate \eqref{E:maintheorem:2} with $t$ replaced by $s$ over the interval $(t, 1)$ to obtain
\begin{equation*}
    \int_t^1 \frac{\beta}{s} \dd s \leq \int_t^1 \frac{a(s)}{A(s)} \dd s,
\end{equation*}
whence
\begin{equation*}
    \log\Big( \frac1{t^\beta} \Big) \leq \log\Big( \frac{A(1)}{A(t)} \Big),
\end{equation*}
which together with the monotonicity of the logarithm shows
\begin{equation*}
    \frac{A(t)}{t^2} \leq A(1) t^{\beta-2} \quad \text{for every $t\in(0,1)$}.
\end{equation*}
It is now easy to see that this together with the monotonicity and finiteness of $A$ implies \eqref{E:maintheorem:4}. Furthermore, integrating by parts and using the fact that
\begin{equation*}
    \lim_{t\to 0^+} \frac{A(t)}{t} = 0,
\end{equation*}
we see that
\begin{equation}\label{E:maintheorem:5}
    \int_0^t \frac{a(s)}{s} \dd s  = \frac{A(t)}{t} + \int_0^t \frac{A(s)}{s^2} \dd s \quad \text{for every $t\in(0, \infty)$}.
\end{equation}
In particular, this combined with \eqref{E:maintheorem:4} shows that we also have
\begin{equation}\label{E:maintheorem:6}
    \int_0^t \frac{a(s)}{s} \dd s < \infty \quad \text{for every $t\in(0, \infty)$}.
\end{equation}
Hence, using \eqref{E:maintheorem:5}, \eqref{E:maintheorem:3}, and \eqref{E:maintheorem:6}, we arrive at
\begin{equation}\label{E:maintheorem:7}
    \int_0^t \frac{a(s)}{s} \dd s \leq \frac{\beta}{\beta - 1}\frac{A(t)}{t} \quad \text{for every $t\in(0, \infty)$}.
\end{equation}
Now, let $T$ be an operator satisfying the assumptions of~\ref{it:interpolation_modular} whose constant of quasilinearity (that is, the constant  from~\eqref{E:quasilinear}) is denoted by $c$. Let further $x = \{x_n\}_{n=1}^\infty\in \lA$. We may clearly assume that $\|x\|_{\lA} > 0$, for otherwise there is nothing to prove. Set
\begin{equation*}
    C = \max\{\|T\|_{\ell_1\to \ell_{1, \infty}}, \|T\|_{\ell_\infty \to \ell_\infty}, c\} < \infty.
\end{equation*}
For each $t\in(0,\infty)$, we define sequences $x_t$ and $x^t$ as
\begin{align*}
    \{(x_t)_n\}_{n = 1}^\infty &= \{x_n\chi_{\{k\in\N: |x_k| > \frac{t}{2C^2}\}}(n)\}_{n=1}^\infty \\
    \intertext{and}
    \{(x^t)_n\}_{n = 1}^\infty &= \{x_n\chi_{\{k\in\N: |x_k| \leq \frac{t}{2C^2}\}}(n)\}_{n=1}^\infty = x - x_t.
\end{align*}
Since for each $t\in(0, \infty)$ we have
\begin{align*}
    \#\{n\in\N: |(Tx)_n| > t\} &\leq \#\Big\{ n\in\N: |(Tx_t)_n| > \frac{t}{2C} \Big\} \\
    &\quad+ \#\Big\{ n\in\N: |(Tx^t)_n| > \frac{t}{2C} \Big\} \\
    &= \#\Big\{ n\in\N: |(Tx_t)_n| > \frac{t}{2C} \Big\} \\
    &\leq \frac{2C^2}{t} \|x_t\|_{\ell_1} = \frac{2C^2}{t} \sum_{n = 1}^\infty |x_n|\chi_{\{s\geq0: s<2C^2|x_n|\}}(t),
\end{align*}
we obtain, using \eqref{prel:Orlicz_mnodular_via_distribution} and \eqref{E:maintheorem:7},
\begin{align*}
    \sum_{n = 1}^\infty A(|Tx|_n) &= \int_0^\infty a(t) \#\{n\in\N: |(Tx)_n| > t\} \dd t \\
    &\leq 2C^2 \sum_{n = 1}^\infty |x_n| \int_0^{2C^2|x_n|} \frac{a(t)}{t} \dd t \\
    &\leq \frac{\beta}{\beta - 1} \sum_{n = 1}^\infty A(2C^2 |x_n|).
\end{align*}
Hence, \eqref{E:interpolation:modular} is valid with $M = \beta/(\beta - 1)$ and $\gamma = 2C^2$.

Next, the fact that \ref{it:interpolation_modular} implies \ref{it:interpolation_norm} is easy. Indeed, setting
\begin{equation*}
    \lambda = \gamma M \|x\|_{\lA} \in(0, \infty),
\end{equation*}
we use \eqref{E:interpolation:modular} to obtain
\begin{equation*}
    \rA\Big( \frac{Tx}{\lambda} \Big) \leq M \sum_{n = 1}^\infty A\Big( \frac{|x_n|}{M \|x\|_{\lA}} \Big) \leq \sum_{n = 1}^\infty A\Big( \frac{|x_n|}{\|x\|_{\lA}} \Big) \leq 1,
\end{equation*}
where we also used \eqref{prel:convexity_small_lambda} and \eqref{prel:inf_att_by_Lux_norm}. It follows that
\begin{equation*}
    \|Tx\|_{\lA} \leq \lambda = \gamma M \|x\|_{\lA} \quad \text{for every $x=\{x_n\}_{n=1}^\infty\in\lA$}.
\end{equation*}
In other words, $T$ is bounded on $\lA$.

Now, we show that \ref{it:interpolation_norm} implies \ref{it:average_bdd}. Since $P$ is clearly linear and satisfies $\|P\|_{\ell_\infty\to\ell_\infty} = 1$, we only need to observe that it is of weak $(1,1)$-type, which is easy. Indeed, using \eqref{prel:HL_on_P} and \eqref{E:weak_l1_in_terms_of_rearrangement} together with the monotonicity of $P(x^*)$, we have
\begin{equation*}
    \|Px\|_{\ell_{1,\infty}} \leq \|P(x^*)\|_{\ell_{1,\infty}}  = \sup_{n\in\N} n \frac1{n}\sum_{k = 1}^n x^*_k = \|x^*\|_{\ell_1} = \|x\|_{\ell_1}
\end{equation*}
for every $x=\{x_n\}_{n = 1}^\infty$. Therefore, assuming \ref{it:interpolation_norm}, the boundedness of $P$ on $\lA$ follows, yielding~\ref{it:average_bdd}.

Next, we show that \ref{it:average_bdd} implies \ref{it:maximal_bdd}. This can be proved in the same way as in the continuous case on $\rn$. Indeed, by \cite[Lemma~11]{BG-E:06}, which follows from its classical analogue for functions (see~\cite[Chapter~3, Theorem~3.8]{BS}), we have
\begin{equation}\label{E:maintheorem:discrete_R-H-W}
    C_1 (Mx)_n^* \leq (P(x^*))_n \leq C_2 (Mx)_n^* \quad \text{for all $n\in\N$ and $x=\{x_n\}_{n = 1}^\infty$},
\end{equation}
where $C_1$ and $C_2$ are absolute constants. Hence, using the first inequality in \eqref{E:maintheorem:discrete_R-H-W} and \eqref{prel:Lux_norm_ri}, we obtain
\begin{equation*}
    \|Mx\|_{\lA} \leq C_1^{-1} \|P(x^*)\|_{\lA} \leq C_1^{-1}\|P\|_{\lA \to \lA} \|x^*\|_{\lA} = C_1^{-1}\|P\|_{\lA \to \lA} \|x\|_{\lA}
\end{equation*}
for every $x=\{x_n\}_{n = 1}^\infty$. Therefore, \ref{it:average_bdd} implies \ref{it:maximal_bdd}.

We finally show that~\ref{it:maximal_bdd_restricted} implies~\ref{it:one_fund_seq}. We may assume without loss of generality that $A$ grows faster than linearly near infinity (see the proof of Lemma~\ref{lem:better_Young_function}), which means that $\tA$ can be assumed to be finite everywhere. In particular, $\tA^{-1}$ is the classical inverse function to $A$ on $[0, \infty)$, which is continuous and strictly increasing. We first note that, assuming~\ref{it:maximal_bdd_restricted}, $\tA$ has to be positive outside the origin. Indeed, if $\tA(t)=0$ for some $t_0>0$ and all $t\in(0,t_0)$, then $A(t)\ge ct$ for some $c>0$ and $t_1>0$ and all $t\in(0,t_1)$. But then, the sequence $x=e^1$ clearly satisfies $x\in\ell_A$, while
\begin{equation*}
    \|Mx\|_{\ell_A}
    \ge 
    c\sum_{k=n_0}^{\infty}\frac{1}{k}
    =\infty,
\end{equation*}
where $n_0\in\N$ is chosen in such a way that $\frac{1}{n_0}\le t_1$.
We shall now construct the sequence $\{t_n\}_{n=1}^\infty$ with the properties required by~\ref{it:one_fund_seq}. To this end, fix some $m\in\N$ satisfying
\begin{equation}
    \label{E:log-estimate}
   m\ge e^{2C},
\end{equation}
where $C$ is the constant from \eqref{E:maximal_bdd_restricted},
and denote
\begin{equation*}
    t_n
    =\tA^{-1}\left(\frac{1}{mn}\right)
    \quad\text{for $n\in\N$.}
\end{equation*}
Then, $\{t_n\}_{n=1}^\infty$ is obviously a decreasing sequence converging to zero.
Moreover, by convexity, one has, for every $n\in\N$,
\begin{equation}
    \label{E:lower-bound-tA}
   \tA(2t_{n+1})
   \ge 2\tA(t_{n+1})
   =
   \frac{2}{m(n+1)}\ge\frac{1}{mn}.
\end{equation}
On applying the increasing function $\tA^{-1}$ to both sides of~\eqref{E:lower-bound-tA}, we get
\begin{equation*}
   2t_{n+1}
   \ge t_n
   \quad\text{for every $n\in\N$.}
\end{equation*}
In conclusion, our sequence $\{t_n\}_{n=1}^\infty$ satisfies~\eqref{E2}. It remains to verify that it also satisfies~\eqref{E3}. To this end, fix $n\in\N$ and consider the sequence $x=\chi_{[1,n]}$. Then, for $k\in\N$, one has
\begin{equation*}
    (Px)_k=
    \begin{cases}
        1&\text{if $k\le n$}
            \\
        \frac{n}{k}&\text{if $k\ge n$.}        
    \end{cases}
\end{equation*}
Furthermore, by the definition of the Orlicz norm and~\eqref{E:log-estimate}, we have
\begin{align*}
    \|Px\|_{\lAOrl}
    &\ge
    \sum_{k=1}^{\infty}
    (Px)_k\tA^{-1}\left(\frac{1}{mn}\right)\left(\chi_{[1,mn]}\right)_k
        \\
    &\ge 
    \tA^{-1}\left(\frac{1}{mn}\right)
    \sum_{k=n}^{mn}\frac{n}{k}
    =
    n\tA^{-1}\left(\frac{1}{mn}\right)
    \sum_{k=n}^{mn}\frac{1}{k}
        \nonumber\\
    &\ge
    n\tA^{-1}\left(\frac{1}{mn}\right)
    \int_{n}^{mn}\frac{dx}{x}  
    =
    n\tA^{-1}\left(\frac{1}{mn}\right)
    \log m    
        \nonumber\\
    &\ge
    2Cn\tA^{-1}\left(\frac{1}{mn}\right).\nonumber
\end{align*}
On the other hand, using the obvious inequality $(Px)_n\leq (Mx)_n$ for every $n\in\N$, \ref{it:maximal_bdd_restricted}, and \eqref{prel:Orlicz_norm_char_seq}, we obtain
\begin{equation*}
    \|Px\|_{\lAOrl} \leq \|Mx\|_{\lAOrl}
    \le C\|x\|_{\lAOrl}
    = Cn\tA^{-1}\left(\frac{1}{n}\right).
\end{equation*}
So, altogether,
\begin{equation*}
    2\tA^{-1}
    \left(\frac{1}{mn}\right)
    \le
    \tA^{-1}\left(\frac{1}{n}\right),
\end{equation*}
or, in other words,
\begin{equation}
    \label{E:dyadic-bound-2}
    2t_n
    \le
    \tA^{-1}\left(\frac{1}{n}\right).
\end{equation}
Applying the increasing function $\tA$ to both sides of~\eqref{E:dyadic-bound-2}, we get
\begin{equation*}
    \tA(2t_n)
    \le
    \frac{1}{n} = m \tA(t_n).
\end{equation*}
This establishes~\eqref{E3} for the sequence $\{t_n\}_{n=1}^\infty$ and thus finishes the proof of the implication \ref{it:maximal_bdd_restricted}$\Rightarrow$\ref{it:one_fund_seq}. Since the remaining implications~\ref{it:maximal_bdd}$\Rightarrow$\ref{it:maximal_bdd_restricted}
and~\ref{it:delta20}
$\Rightarrow$\ref{it:every_fund_seq}$\Rightarrow$\ref{it:one_fund_seq} are trivial, the proof is complete.
\end{proof}

\begin{proof}[Proof of Theorem~\ref{T:fundamental-sequence}] 
The `if' part of the assertion follows from the implication 
$\ref{it:one_fund_seq}
\Rightarrow
\ref{it:delta20}
$ of Theorem~\ref{T:main}. We shall thus prove the `only if' part.

Assume that~\eqref{E2} is not true. Then one has $K>2$, where 
\begin{equation*}
    K=\limsup_{n\in\N} \frac{t_n}{t_{n+1}}.
\end{equation*}
Suppose first that $\{t_n\}_{n=1}^\infty$ has the additional property that 
\begin{equation}
    \label{E:extra}
    \frac{t_n}{t_{n+1}}>\varrho
\end{equation}
for some $\varrho>2$ and every $n\in\N$. With no loss of generality, let us assume that $t_1=1$. Set
\begin{equation*}
    a(t)=
    \sum_{n=1}^{\infty}
    \frac{1}{n!}\chi_{(2t_{n+1},2t_n]}(t)
    +2t\chi_{(2,\infty)}(t)
    \quad\text{for $t\in(0,\infty)$}
\end{equation*}
and $a(0)=0$. Then $a$ is a left-continuous nondecreasing function on $(0,\infty)$ with $a(0+)=0$, which is finite and positive outside the origin. Consequently, the function $A$, defined as
\begin{equation*}
    A(t)=
    \int_{0}^{t}a(s)\dd s
    \quad\text{for $t\in[0,\infty)$,}
\end{equation*}
is a positive (outside the origin) Young function (in fact, an $N$-function). Set
\begin{equation*}
    \alpha=
    \frac{2}{\varrho}.
\end{equation*}
Then $\alpha\in(0,1)$. Since
\begin{equation*}
    2t_{n+1} <\alpha t_n<t_n<2t_n
    \quad\text{for $n\in\N$,}
\end{equation*}
we have $a(\alpha t_n)=a(2t_n)$. Thus, owing to the monotonicity of $a$ and the assumption~\eqref{E:extra}, we have
\begin{align}
    \label{E:upper}
    \frac{A(2t_n)}{A(t_n)}
    &=
    1+\frac{\int_{t_n}^{2t_n}a(s)\dd s}{\int_{0}^{t_n}a(s)\dd s}
        \le
    1+\frac{\int_{t_n}^{2t_n}a(s)\dd s}{\int_{\alpha t_n}^{t_n}a(s)\dd s}
        \\
        &\le 
     1+\frac{t_na(2t_n)}{(1-\alpha)t_na(\alpha t_n)}
    =\frac{2-\alpha}{1-\alpha}.
    \nonumber
\end{align}
Hence, the sequence $\{\frac{A(2t_n)}{A(t_n)}\}_{n=1}^\infty$ is bounded.
On the other hand, since, for $n\in\N$, $n\ge2$, one has
\begin{equation*}
    2t_{n} <4t_{n}<2\alpha t_{n-1}<2 t_{n-1},
\end{equation*}
we have
\begin{equation*}
    a(s)=a(2t_{n-1})
    \quad\text{for every
    $s\in(2t_n, 4t_n)$.}
\end{equation*}
Therefore,
\begin{align*}
    \frac{A(4t_n)}{A(2t_n)}
    &=
    1+\frac{\int_{2t_n}^{4t_n}a(s)\dd s}{\int_{0}^{2t_n}a(s)\dd s}
        \ge
    1+\frac{2t_na(2t_{n-1})}{2t_na(2t_n)}=1+n.
\end{align*}
Hence $A\notin\DTwoZ$. In combination with~\eqref{E:upper}, this shows that $\{t_n\}_{n=1}^\infty$ is not $\DTwoZ$-fundamental.

What remains is to chip off the additional assumption~\eqref{E:extra}. Assume, thus, that we still have $K>2$, but~\eqref{E:extra} is not necessarily satisfied. 
Fix some $\varrho\in(2,K)$ and find a strictly increasing sequence of natural numbers $\{n_k\}_{k=1}^\infty$ such that
\begin{equation}
    \label{E:limsup-restricted}
    \frac{t_{n_k}}{t_{n_k+1}}\ge\varrho.
\end{equation}
Owing to the definition of the limes superior, such a sequence has to exist. Put
\begin{align*}
    a(t)&=
    \sum_{k=1}^{\infty}
    \frac{2t_{n_1}}{k!}
    \chi_{(2t_{n_{k+1}+1}
    ,2t_{n_k+1}]}(t)
    +2t_{n_1}
    \chi_{(2t_{n_1+1},2t_{n_1}]}(t)
    +t\chi_{(2t_{n_1},\infty)}(t)
\end{align*}
for $t\in(0,\infty)$ 
and $a(0)=0$. Then $a$ is a left-continuous nondecreasing function on $(0,\infty)$ with $a(0+)=0$, which is finite and positive outside the origin. Consequently, the function $A$, defined as
\begin{equation*}
    A(t)=
    \int_{0}^{t}a(s)\dd s
    \quad\text{for $t\in[0,\infty)$,}
\end{equation*}
is, once again, a positive (outside the origin) Young function (or an $N$-function for that matter). Now it is easy to show (more or less verbatim as above) that 
\begin{equation*}
    \sup_{k\in\N}
    \frac{A(2t_{n_k})}{A(t_{n_k})}
    <\infty,
\end{equation*}
while
\begin{equation*}
    \sup_{k\in\N}
    \frac{A(4t_{n_k})}{A(2t_{n_k})}
    =\infty.
\end{equation*}
What remains is to show that
\begin{equation*}
    \sup_{n\in\N}
    \frac{A(2t_{n})}{A(t_{n})}
    <\infty.
\end{equation*}
To this end, fix any $j> n_1$ such that $j\neq n_k$ for any $k\in\N$. Then there is a uniquely determined $m\in\N$ such that $n_{m}<j<n_{m+1}$. 
Hence,
\begin{equation*}
    t_{n_{m+1}+1}<t_{n_{m+1}}
    <t_j<t_{n_m},
\end{equation*}
and, owing to~\eqref{E:limsup-restricted},
\begin{equation*}
    t_j>t_{n_{m+1}}\ge\varrho
    \cdot  t_{n_{m+1}+1}.
\end{equation*}
This together with $j\geq n_m+1$ implies that
\begin{equation*}
    2t_{n_{m+1}+1}
    \le
    \frac{2}{\varrho}t_j
    <t_j
    <2t_j
    \le
    2t_{n_m+1}.
\end{equation*}
Owing to the definition of $a$, this shows that $a$ is constantly equal to $a(2t_{n_m+1})$ on the interval $(2t_{n_{m+1}+1},2t_j)$.
Moreover,
\begin{equation*}
    0<\frac{t_j}{t_j-2t_{n_{m+1}+1}}
    =
    1+
    \frac{1}{\frac{t_j}{2t_{n_{m+1}+1}}-1}
    \le
    1+\frac{1}{\frac{\varrho}{2}-1}=\frac{\varrho}{\varrho-2}.
\end{equation*}
Altogether, we arrive at
\begin{align*}
    \frac{A(2t_j)}{A(t_j)}
    &=
    1+\frac{\int_{t_j}^{2t_j}a(s)\dd s}
    {\int_{0}^{t_j}a(s)\dd s}
        \le
    1+\frac{\int_{t_j}^{2t_j}a(s)\dd s}{\int_{2t_{n_{m+1}+1}}^{t_j}a(s)\dd s}
        \\
        &=
     1+
     \frac{t_ja(2t_j)}{(t_j-2t_{n_{m+1}+1})a(2t_{j})}
   \le
    1+\frac{\varrho}{\varrho-2}.
    \nonumber
\end{align*}
Finally, we have to consider $j<n_1$. For any such $j$, one has
either $t_j>4t_{n_1}$, in which case one has
\begin{align*}
    \frac{A(2t_j)}{A(t_j)}
    &=
    1+\frac{\int_{t_j}^{2t_j}a(s)\dd s}
    {\int_{0}^{t_j}a(s)\dd s}
        \le
    1+\frac{\int_{t_j}^{2t_j}s\dd s}{\int_{\frac12t_{j}}^{t_j}s\dd s}
       =5,
    \nonumber
\end{align*}
or $t_j\le4t_{n_1}$, in which case one has
\begin{equation*} 
    A(2t_j)\le A(8t_{n_1})\le
    8t_{n_1}a(8t_{n_1})
    \le 64 t_{n_1}^2,
\end{equation*}
while, using $t_j>t_{n_1}$ and \eqref{E:limsup-restricted},
\begin{equation*}
    A(t_j)\ge 
    \int_{t_{n_1+1}}^{t_{n_1}}a(s)\dd s
    =
    2t_{n_1}
    \left(t_{n_1}
    -t_{n_1+1}\right)
    \ge
    \frac{2(\varrho-1)}{\varrho}
    t_{n_1}^2,
\end{equation*}
hence, once again
\begin{align*}
    \frac{A(2t_j)}{A(t_j)}
    \le
    \frac{32\varrho}{\varrho-1}.
\end{align*}
So the sequence $\{\frac{A(2t_n)}{A(t_n)}\}_{n=1}^\infty$ is bounded uniformly in $n$, and the proof is complete.
\end{proof}

\begin{proof}[Proof of Theorem~\ref{T:main_dual}]
    We start by showing that \ref{it:average_bdd_dual} and \ref{it:delta20_dual} are equivalent. To this end, applying Theorem~\ref{T:main} to $\tA$ and using \eqref{prel:double_complementary}, we see that $A\in\DTwoZ$ if and only if the averaging operator $P$ is bounded on $\ltA$. Note that
    \begin{align*}
        \sup_{\|x\|_{\ltA}\leq1, x\geq0}\|Px\|_{\ltA} &= \sup_{\substack{\|x\|_{\ltA}\leq1, x\geq0\\ \|y\|_{\lAOrl}\leq1, y\geq0}} \sum_{n=1}^\infty (Px)_n y_n \\
        &= \sup_{\substack{\|x\|_{\ltA}\leq1, x\geq0\\ \|y\|_{\lAOrl}\leq1, y\geq0}} \sum_{n=1}^\infty \Big( \frac1{n} \sum_{k = 1}^n x_k \Big) y_n \\
        &= \sup_{\substack{\|x\|_{\ltA}\leq1, x\geq0\\ \|y\|_{\lAOrl}\leq1, y\geq0}} \sum_{k = 1}^\infty x_k \sum_{n=k}^\infty \frac{y_n}{n} \\
        &= \sup_{\|y\|_{\lAOrl}\leq1, y\geq0} \|Qy\|_{\lAOrl}
    \end{align*}
    thanks to \eqref{prel:asoc_double_Lux}, \eqref{prel:asoc_double_Orl}, and \eqref{prel:double_complementary}. It follows that $P$ is bounded on $\ltA$ if and only if $Q$ is bounded on $\lA$ (recall \eqref{prel:Orlicz_and_Lux_equiv}). Hence, \ref{it:average_bdd_dual} and \ref{it:delta20_dual} are equivalent.

    Next, it is easy to see that \ref{it:interpolation_dual} implies \ref{it:average_bdd_dual}. Indeed, since $Q|x|$ is a nonincreasing sequence for every sequence $x$, we have
    \begin{equation*}
        \sum_{k=1}^N (Qx)_k^* \leq \sum_{k=1}^N (Q|x|)_k^* = \sum_{k=1}^N (Q|x|)_k \quad \text{for every $N\in\N$},
    \end{equation*}
    which is \eqref{E:interpolation_dual:HLP} with $T=Q$ and $C=1$. Therefore, the proof will be finished once we show that \ref{it:average_bdd_dual} implies \ref{it:interpolation_dual}. To this end, with a slight overload of notation, for a sequence $z=\{z_n\}_{n=1}^\infty$, let $z^*\colon[0, \infty) \to [0 ,\infty]$ be the function defined as
    \begin{equation*}
        z^*(t) = \inf\{\lambda>0: \#\{n\in\N: |z_n| > \lambda \}\leq t\},\ t\in[0, \infty).
    \end{equation*}
    Note that $z^*(t) = z^*_n$ for all $t\in[0,\infty)$ and $n\in\N$ satisfying $t\in[n-1, n)$. Now, let $T$ and $x=\{x_n\}_{n=1}^\infty$ be as in \ref{it:interpolation_dual}. For each $t\in[0, \infty)$ and the unique $n\in\N$ such that $t\in[n-1,n)$, we have
    \begin{align*}
        \int_0^t (Tx)^*(s) \dd s &= \sum_{k = 1}^{n-1} \int_{k-1}^k (Tx)^*(s) \dd s + \int_{n-1}^t (Tx)^*(s) \dd s \\
        &= \sum_{k=1}^{n-1} (Tx)_k^* + (t - n + 1)(Tx)_n^* \leq \sum_{k=1}^{n-1} (Tx)_k^* + (Tx)_{n-1}^* \\
        &\leq 2\sum_{k=1}^{n-1} (Tx)_k^* \leq 2C \sum_{k=1}^{n-1} (Q|x|)_k = 2C \int_0^{n-1} (Q|x|)^*(s) \dd s \\
        &\leq 2C \int_0^t (Q|x|)^*(s) \dd s
    \end{align*}
    thanks to \eqref{E:interpolation_dual:HLP} with $N = n-1$. 
    Hence, it follows from the so-called Hardy-Littlewood-P\'olya principle \cite[Chapter~2, Theorem~4.6]{BS} that
    \begin{equation*}
        \|Tx\|_{\lA} \leq 2C \|Q|x|\|_{\lA},
    \end{equation*}
    whence we obtain
    \begin{equation*}
        \|Tx\|_{\lA} \leq 2C\|Q\|_{\lA\to\lA} \||x|\|_{\lA} = 2C\|Q\|_{\lA\to\lA} \|x\|_{\lA} \quad \text{for every $x\in\lA$}.
    \end{equation*}
    In other words, $T$ is bounded on $\lA$, which finishes the proof. 
\end{proof}

\begin{proof}[Proof of Theorem~\ref{T:hilbert}]
    By~\cite[Theorem~3]{And:75},
    ~\eqref{E:h-on-orlicz} holds if and only if both 
    \begin{equation}\label{E:p-on-orlicz}
        P\colon \ell_A\to\ell_A
    \end{equation}
    and
    \begin{equation}\label{E:q-on-orlicz}
        Q\colon \ell_A\to\ell_A
    \end{equation}
    do.
    By Theorem~\ref{T:main},~\eqref{E:p-on-orlicz} holds if and only if $\tA\in\DTwoZ$, while~\eqref{E:q-on-orlicz} holds if and only if $A\in\DTwoZ$ by Theorem~\ref{T:main_dual}.  
\end{proof}


\end{document}